\documentclass[12pt,a4paper]{article}

\usepackage{theorem,enumerate}
\usepackage{amsmath,latexsym,amssymb,amsfonts}
\usepackage{eucal}
\usepackage{eufrak}
\usepackage{mathrsfs}
\usepackage{ulem}

\newcounter{Scounter}
\theorembodyfont{\normalfont\slshape}
\newtheorem{thm}{Theorem}

\newtheorem{Thm}{Theorem}

\newtheorem{lem}[thm]{Lemma}

\newtheorem{Q}[thm]{Question} 

\newtheorem{claim}{Claim}[section] 
\newtheorem{subclaim}{Subclaim}[claim]

\newcommand{\proof}{\medbreak\noindent\textit{Proof.}\quad}

\newcommand{\qed}{{$\quad\square$\vs{3.6}}}

\newcommand{\vs}[1]{\vspace*{#1 mm}}

\numberwithin{equation}{section}

\bfseries\normalfont

\title{On the number of $4$-contractible edges in plane triangulations}

\author{
Toshiki Abe$^{1}$
\and
Michitaka Furuya$^{2}$
\and
Raiji Mukae$^{3}$
\and
~~~~~~~~~Shoichi Tsuchiya$^{4}$\footnote{Email address: \texttt{s.tsuchiya@isc.senshu-u.ac.jp}}\and
\\
\small
$^{1}$\small\textsl{National Institute of Technology, Miyakonojo College}\\
$^{2}$\small\textsl{School of Engineering, Kwansei Gakuin University}\\ 
$^{3}$\small\textsl{Faculty of Education, University of Miyazaki}\\ 
$^{4}$\small\textsl{School of Network and Information, Senshu University}\\ 
}

\date{}

\begin{document}

\maketitle

\begin{abstract}
In 2007, Ando and Egawa proved a theorem
which provides a lower bound on the number of contractible edges preserving $4$-connectedness in $4$-connected graphs.
In this paper, we refine their bounds, especially for the $4$-connected plane triangulations.
In particular, we show that
if $G$ is a $4$-connected plane triangulation of order at least $7$, then $G$ contains at least $|V_{\ge 5}|+2$ contractible edges preserving $4$-connectedness, where $V_{\ge 5}$ is the set of vertices of degree at least $5$.
We also determine the extremal graphs. 
\end{abstract}

\noindent{\bf Keywords}: 
Plane graphs, triangulations, contractible edges.

\noindent
{\it AMS 2020 Mathematics Subject Classification.} 05C10, 05C40.

\section{Introduction}\label{sec1}

In this paper, we only deal with finite graphs without loops and multiple edges.
For a graph $G$, $|G|$ denotes the number of vertices of $G$.
For $u\in V(G)$, let $N_{G}(u)$ and $d_{G}(u)$ denote the {\sl neighborhood} and the {\sl degree of $u$}, respectively; thus $N_{G}(u)=\{v\in V(G):uv\in E(G)\}$ and $d_{G}(u)=|N_{G}(u)|$.
Also let $N_{G}[u]$ denote the {\sl closed neighborhood} of $u$, 
i.e., we define $N_{G}[u] = N_{G}(u) \cup \{ u \}$.
Similarly, for $X \subset V(G)$, let $N_{G}(X)$ and $N_{G}[X]$ denote the {\sl neighborhood} and {\sl closed neighborhood} of $X$, i.e., we define $N_{G}(X) =\{v\in V(G)-X: u \in X, uv\in E(G)\}$ and $N_{G}[X] = N_{G}(X) \cup X $.
For a graph $G$ and $X \subset V(G)$, let $G[X]$ denote the subgraph of $G$ induced by $X$.
A graph $G$ of order at least $k-1$ is said to be {\sl $k$-connected} if for any vertex sets $S \subset V(G)$ with $|S| \le k-1$, $G-S$ is connected.
For a $k$-connected graph $G$, an edge $e$ is called {\sl $k$-contractible} 
if the graph obtained from $G$ by contracting $e$ preserves $k$-connectedness.
Let $E_k$ be the set of $k$-contractible edges in $G$.
Tutte~\cite{T} proved that if $G$ is a $3$-connected graph, then $E_3 \neq \emptyset$.
Later, Ando, Enomoto, and Saito \cite{AES} proved a theorem guaranteeing the existence of many $3$-contractible edges in $3$-connected graphs. 
Also, they characterized its extremal graphs.
On the other hand, Martinov~\cite{M} characterized $4$-connected graphs $G$ with $E_4 = \emptyset$.
According to this result, we cannot guarantee $4$-contractible edges with linear order of $|G|$ in $4$-connected graphs. 
Ando, Egawa, Kawarabayashi, and Kriesell gave a bound on $|E_4|$ as follows.

\begin{Thm}[Ando, Egawa, Kawarabayashi and Kriesell \cite{AEKK}]\label{thm:AEKK}
Let $G$ be a $4$-connected graph.
Then we have $|E_4| \ge \frac{1}{34} \cdot (|E(G)|-2|G|)$.
\end{Thm}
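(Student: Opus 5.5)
We would argue by discharging. We set up an injective-ish charging scheme from "excess edges" to $4$-contractible edges, with each contractible edge receiving charge at most $34$.

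\emph{Step 1: reformulate.} Since $|E(G)|-2|G| = \frac12\sum_{v}(d_G(v)-4)$ and $G$ is $4$-connected (so every degree is at least $4$), it suffices to show
\[
\sum_{v\in V(G)} (d_G(v)-4) \le 68\,|E_4|.
\]
The key structural input is Martinov's characterization of $4$-connected graphs with $E_4=\emptyset$: these are exactly the squares of cycles $C_n^2$ ($n\ge 5$) and the line graphs of cubic cyclically $4$-edge-connected graphs. All of them are $4$-regular, so the inequality holds trivially when $E_4=\emptyset$. This suggests that vertices of degree at least $5$ must be "near" contractible edges.

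\emph{Step 2: local lemma.} An edge $xy$ is non-$4$-contractible iff $\{x,y\}$ lies in a $4$-cut $\{x,y,a,b\}$. The standard tools are these.
\begin{enumerate}[(i)]
\item Every vertex of degree at least $5$ is incident with, or within bounded distance of, a contractible edge. This would be proven via the known lemma (Ando--Egawa type) that if all edges at $x$ are noncontractible, then $x$ lies in a triangle whose edges are all noncontractible, combined with the fragment/atom argument.
\item One then chooses a minimal fragment $A$ with respect to a $4$-cut $S$ containing a noncontractible edge, and shows that $A$ contains a contractible edge incident to a vertex near $S$.
\end{enumerate}
We would use these to define, for each vertex $v$ with $d(v)\ge5$, a contractible edge $\phi(v)$ at distance at most $2$ from $v$. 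We would also show that the edges within distance $1$ or $2$ carry enough contractible edges to absorb charge $d(v)-4$: for instance, that $v$ is incident to at least $\lceil (d(v)-4)/c\rceil$ contractible edges, or that there is a path of length at most $2$ from $v$ ending in one.

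\emph{Step 3: count.} Each contractible edge $e=xy$ receives charge only from vertices within distance $2$ of $x$ or $y$. If charges were transferred naively, this would be unbounded because degrees are unbounded. So we would instead send charge only along structures forced by $4$-cuts: a vertex $v$ all of whose incident edges are noncontractible has bounded "bad" structure (each incident edge lies in a triangle of noncontractible edges, which forces $d(v)$ to be controlled by the local configuration). The total charge arriving at a single edge is then bounded by a constant, and tracking the constant carefully gives $68$, i.e.\ $1/34$.

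The main obstacle is Step 2/3: bounding the charge received by a single contractible edge. High-degree vertices all of whose edges are noncontractible must be controlled. I would first try to prove that such a vertex has at most a bounded number of noncontractible edges not lying on a contractible-adjacent triangle, using the crossing behavior of $4$-cuts (two crossing $4$-cuts intersect in a controlled way). If that fails, I would fall back on the Ando--Egawa--Kawarabayashi--Kriesell argument of assigning charge to atoms of $4$-cuts rather than to vertices.
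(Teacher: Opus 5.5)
The paper does not prove this statement. It is quoted from \cite{AEKK} as background, so your proposal has to stand on its own, and as written it is a plan rather than a proof. Step~1 is correct: $|E(G)|-2|G|=\frac12\sum_v(d_G(v)-4)$, so the target is $\sum_v(d_G(v)-4)\le 68|E_4|$, and Martinov's theorem settles the case $E_4=\emptyset$. Steps~2 and~3, however, rest on mechanisms that are either false or too weak.

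A counterexample refutes several of them at once. Take ${\rm DW}_n$ with $n\ge 5$, hubs $x,x'$ and rim $y_1y_2\cdots y_ny_1$.
\begin{enumerate}[(1)]
\item Each spoke $xy_i$ is non-contractible, since $\{x,y_i,x',y_j\}$ is a $4$-cut whenever $j\notin\{i-1,i,i+1\}$.
\item Each rim edge is contractible, because ${\rm DW}_n/y_iy_{i+1}\cong{\rm DW}_{n-1}$ is $4$-connected.
\end{enumerate}
So the hub has degree $n$, is incident with no contractible edge, and every triangle through it contains a contractible rim edge. This contradicts three things in your proposal:
\begin{enumerate}[(1)]
\item the lemma you quote in (i), as stated;
\item your suggestion that $v$ is incident with at least $\lceil (d(v)-4)/c\rceil$ contractible edges;
\item your assertion in Step~3 that a vertex all of whose edges are non-contractible has its degree controlled by a bounded local configuration.
\end{enumerate}
The remaining option is one contractible edge $\phi(v)$ within distance $2$ of each $v\in V_{\ge 5}$. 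A single edge cannot absorb the charge $d(v)-4$, which is unbounded, and you give no bound on $|\phi^{-1}(e)|$ anyway.

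What is missing is precisely the content of the theorem. For every vertex $x$ you must exhibit at least $(d(x)-4)/c$ distinct contractible edges attributable to $x$. In the double wheel these are edges of the link of $x$, not edges at $x$; compare the role of $F_{v,3}$ in this paper, which however only gives each vertex constant weight. You must also prove that each contractible edge is attributed to only boundedly many vertices.

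Producing these edges requires a real analysis of the $4$-cuts containing the non-contractible edges at $x$: how they cross or nest, and why the fragments they determine are disjoint enough to contain different contractible edges. None of this is carried out. The constant $68$ is asserted rather than derived, and your fallback to ``the Ando--Egawa--Kawarabayashi--Kriesell argument'' is an appeal to the result you are trying to prove.
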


In \cite{EN}, another bound of $|E_4|$ was investigated.
In a graph $G$, let $V_k$ denote the vertex set of $G$ consisting of all vertices of degree $k$.
Similarly, let $V_{\ge k}$ denote the vertex set of $G$ consisting of all vertices of degree at least $k$.
Also, Ando and Egawa proved the following.

\begin{Thm}[Ando and Egawa \cite{AE}]\label{thm:AE}
Let $G$ be a $4$-connected graph.
Then we have $|E_4| \ge |V_{\ge 5}|$.
\end{Thm}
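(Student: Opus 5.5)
The plan is to build an injection from $V_{\ge 5}$ into $E_4$, assigning to each vertex of degree at least $5$ a $4$-contractible edge incident with it (or chosen nearby), so that distinct vertices receive distinct edges. The starting point is the standard description of non-contractible edges. An edge $xy$ of a $4$-connected graph $G$ fails to be $4$-contractible exactly when it lies in a $4$-cutset $\{x,y,a,b\}$ of $G$. Call such a cutset $S$, and call a union of components of $G-S$ that is not all of $G-S$ an \emph{atom side} of $S$. Among all pairs $(xy,S)$ with $xy\subseteq S$, we would pick a side $A$ of minimum order. Then we would use Martinov's characterisation~\cite{M} of $4$-connected graphs with $E_4=\emptyset$, namely squares of cycles and line graphs of cyclically $4$-edge-connected cubic graphs, to guide the local structure. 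In both families every vertex has degree $4$, which already suggests that vertices of degree $\ge 5$ must create contractible edges.

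\textbf{Step 1.} Show that every vertex $x$ with $d_G(x)\ge 5$ is incident with at least one $4$-contractible edge. Suppose instead that every edge $xy$ lies in a $4$-cutset. Take an edge $xy$ and a cutset $S\ni x,y$ with a side $A$ of minimum order. Standard crossing arguments on cutsets of size $4$ (the submodularity of $|N(\cdot)|$) show that $A$ is small, of order at most $2$. The neighbours of $x$ in $A$ then have degree $4$, and this forces $x$ to have at most $4$ neighbours in total, a contradiction.

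\textbf{Step 2.} Obtain distinctness. Orient each chosen contractible edge towards the vertex of degree $\ge 5$ that picked it. A conflict occurs only when both ends $u,v$ of a contractible edge $uv$ have degree $\ge 5$ and neither has another contractible edge available. To handle this, I would strengthen Step 1: a vertex of degree $\ge 5$ has either at least two incident contractible edges, or one that goes to a degree-$4$ vertex. A Hall-type (or counting) argument on the bipartite incidence between $V_{\ge5}$ and $E_4$ then gives a matching saturating $V_{\ge5}$. Concretely, it suffices that every connected component $C$ of the subgraph $(V(G),E_4)$ restricted to $V_{\ge5}$-incidences satisfies $|E(C)|\ge|V(C)\cap V_{\ge5}|$, that is, no component is a tree all of whose vertices have degree $\ge5$.

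\textbf{Step 3.} Rule out such a tree component, in particular a single edge $uv$ with $d(u),d(v)\ge5$ whose other incident edges are all non-contractible. I would use two cutsets $S_u$ and $S_v$ through edges at $u$ and at $v$, and analyse how they cross, again with the minimal-side argument.

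I expect Step 3 to be the main obstacle, since it needs a delicate case analysis of crossing $4$-cutsets. Tracking the degrees of vertices in the small sides should still give the contradiction.
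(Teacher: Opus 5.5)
\textbf{Step~1 is false, and the rest of the plan depends on it.} The injection of $V_{\ge 5}$ into \emph{incident} contractible edges and the Hall-type condition on components of $(V(G),E_4)$ both need it. Here is a counterexample. Take a $6$-cycle $l_1l_2\cdots l_6l_1$, a vertex $v$ joined to every $l_i$, a vertex $y$ joined to $l_1,l_2,l_3,l_4$, a vertex $y'$ joined to $l_4,l_5,l_6,l_1$, and the edge $yy'$. This is a plane triangulation of order $9$ with no separating triangle, hence $4$-connected. The degrees are $d(v)=6$, $d(l_1)=d(l_4)=d(y)=d(y')=5$, and $d(l_2)=d(l_3)=d(l_5)=d(l_6)=4$.

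For each $j\in\{2,3,5,6\}$, $N(l_j)$ is a $4$-cut isolating $l_j$. We have $\{v,l_1,l_3\}\subseteq N(l_2)$, $\{v,l_2,l_4\}\subseteq N(l_3)$, $\{v,l_4,l_6\}\subseteq N(l_5)$ and $\{v,l_5,l_1\}\subseteq N(l_6)$. So none of the six edges at $v$ is $4$-contractible. In fact $E_4=\{l_1l_2,l_2l_3,l_3l_4,l_4l_5,l_5l_6,l_6l_1,yy'\}$.

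The minimal sides here are singletons $\{l_j\}$ whose vertex has degree $4$, exactly as your Step~1 predicts. What fails is the final inference that this forces $d(x)\le 4$. A small side $A$ only constrains the neighbours of $x$ lying in $A\cup S$, and several different small sides can together cover all edges at $x$. The strengthened claim in Step~2 fails at the same vertex, and your Hall condition fails because $v$ is isolated in $(V(G),E_4)$. Step~3 only announces a crossing-cut case analysis and contains no argument.

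\textbf{The missing idea is to let $x\in V_{\ge5}$ claim contractible edges not incident with it.} Typically this is an edge joining two degree-$4$ neighbours of $x$, here $l_2l_3$ or $l_5l_6$. Such an edge can be claimed by two vertices of $V_{\ge 5}$ at once: $l_2l_3$ lies in the links of both $v$ and $y$. So an injection must be replaced by a fractional charging. Every $x\in V_{\ge5}$ must collect charge at least $1$ from $4$-contractible edges in its closed neighbourhood, while every edge hands out total charge at most $1$.

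This is the mechanism the paper uses in the triangulation setting:
\begin{itemize}
\item the weight $w(v)=w_1(v)+2w_2(v)+w_3(v)$, where $F_{v,3}$ consists of edges between degree-$4$ neighbours of $v$;
\item the fact that each $e\in E_4$ is counted by at most two vertices, and by only one if it lies in some $F_{v,2}$;
\item the local bound $w(v)\ge2$, obtained from separating $4$-cycles.
\end{itemize}
The example above is exactly the ${\rm DW}^{-}_4$ configuration exploited there. The paper only quotes the Ando--Egawa theorem and does not reprove it. For general $4$-connected graphs you would need an analogue of $w(v)\ge 2$, proved by crossing $4$-cuts, and nothing in your proposal supplies it.
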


A {\sl plane graph} is a graph embedded in the plane without edge-crossings.
If all faces of a plane graph $G$ are triangular, then $G$ is called a {\sl plane triangulation}.  
We note that we do not identify $K_3$ as a plane triangulation.
We can see that every triangulation on a surface is $3$-connected.
Note that Theorems \ref{thm:AEKK} and \ref{thm:AE} do not guarantee 
the number of contractible edges in triangulations on the surfaces other than the plane 
(for the details, see \cite{AFMT}).

Remark that $K_4$ is the unique $3$-connected graph with order $4$, where $K_n$ is a complete graph of order $n$.
In~\cite{AFMT}, it was proved that for a plane triangulation $G$ of order at least $5$, we have $|E_3| \ge |G|+\frac{1}{2}|V_3|$. 

By the Euler's formula, if $G$ is a plane triangulation, then we have $|E(G)|=3|G|-6$, 
and hence we have $|E_4| \ge \frac{1}{34} \cdot (|G|-6)$ by Theorem~\ref{thm:AEKK}.
In a sense, this bound is best possible for $4$-connected plane triangulations 
because the octahedron (i.e., the graph isomorphic to a double wheel of order $6$) 
has no $4$-contractible edges.
However, if we assume $|G|$ is sufficiently large, 
then we can guarantee the existence of more $4$-contractible edges in a $4$-connected plane triangulation.
For a $4$-connected plane triangulation, McCuaig, Haglin, and Venkatesan proved the following.

\begin{Thm}[McCuaig, Haglin and Venkatesan \cite{MHV}]\label{thm:MHV}
Let $G$ be a $4$-connected plane triangulation of order at least $8$.
Then we have $|E_4| \ge \frac{3}{4}|G|$. 
\end{Thm}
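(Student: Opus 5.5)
The plan is to combine a local characterization of $4$-contractible edges with an induction along separating $4$-cycles.

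\textbf{Characterization.} First I will prove: in a $4$-connected plane triangulation $G$ with $|G|\ge 7$, an edge $uv$ is $4$-contractible if and only if $uv$ lies on no separating $4$-cycle. Here a separating cycle means one with vertices both inside and outside.

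The argument goes as follows. Contracting $uv$ gives a plane triangulation $G/uv$, which is simple because $uv$ lies on no separating triangle. A plane triangulation of order at least $6$ is $4$-connected if and only if it has no separating triangle. A separating triangle of $G/uv$ through the new vertex pulls back to a separating $4$-cycle of $G$ through $uv$, and conversely.

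The degenerate case is worth recording explicitly. Suppose a face $uvx$ has $d(x)=4$, with the neighbours of $x$ in cyclic order $u,v,b,a$. Then $uvba$ is a separating $4$-cycle through $uv$ with $x$ alone on one side. So every edge of the link of a degree-$4$ vertex is non-contractible.

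By this characterization, $E(G)\setminus E_4$ is exactly the set of edges covered by separating $4$-cycles. Since $|E(G)|=3|G|-6$, it suffices to show that these cycles cover at most $\frac94|G|-6$ edges.

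\textbf{Case A: every separating $4$-cycle is the link of a degree-$4$ vertex.} Then the non-contractible edges are the union of the links $L(x)$ over $x\in V_4$. The crude bound $4|V_4|$ on the size of this union is not enough when $|V_4|$ is large, so overlaps must be exploited.

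If $x,y\in V_4$ are adjacent, their links share two edges. If they are at distance two, the links share the edges of common faces. I will assign each vertex the charge $\frac94$ and send charge to the edges of each $L(x)$. Using Euler's formula in the form $\sum_v (d(v)-6)=-12$, together with the fact that the degree-$4$ vertices and their links form a very constrained structure, I will check that the total edge count works out.

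Equality-type configurations, such as double wheels, must be examined by hand. This is also where the hypothesis $|G|\ge 8$ enters, because the octahedron has $E_4=\emptyset$.

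\textbf{Case B: there is a nontrivial separating $4$-cycle $C=abcd$,} meaning one with at least two vertices on each side. Choose $C$ so that one side is minimal. Let $G_1$ be obtained from the inside of $C$ and $G_2$ from the outside, each closed up by adding one new vertex adjacent to $a,b,c,d$. Both are $4$-connected plane triangulations that are smaller than $G$.

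A separating $4$-cycle of $G$ not crossing $C$ lives in $G_1$ or $G_2$. Minimality of $C$ controls the ones that cross it, since such a cycle must use two opposite vertices of $C$. Hence $4$-contractible edges of $G_i$ that avoid the added vertex and avoid $C$ remain $4$-contractible in $G$.

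Applying induction, or a direct check when $|G_i|<8$, I will add the two bounds. The edges incident to the two added vertices are then subtracted: there are at most $8$ such edges, and any of them counted would have to be replaced by an edge of $G$ near $C$.

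\textbf{Main obstacle.} I expect the hardest step to be this bookkeeping at $C$. The two inductive bounds give $\frac34(|G|+2)$ minus the loss at $C$, and the loss must be shown to be at most $\frac32$. Doing so requires finding, inside the minimal side, contractible edges incident to $a,b,c,d$. Small pieces of order $6$ or $7$ need separate treatment.

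My first attempt will be to strengthen the induction hypothesis to a version relative to a fixed facial $4$-cycle boundary, that is, to count contractible edges in a near-triangulation bounded by $C$. This should make the gluing additive.
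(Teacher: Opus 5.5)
The paper does not prove this statement. It is quoted from McCuaig, Haglin and Venkatesan, so your proposal has to stand on its own, and it does not. Only the characterization is established; it is the paper's Lemma~\ref{lem:2.1}. Both counting steps are announced rather than carried out.

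Case~A is the easy one, but your plan for it rests on a false overlap claim. If $x,y\in V_4$ are adjacent with $N(x)=\{y,b,c,d\}$ and $N(y)=\{x,b,e,d\}$ in cyclic order, then $L(x)$ and $L(y)$ share no edge. What is true is that $bcde$ is then a separating $4$-cycle with exactly $\{x,y\}$ on one side and $|G|-6\ge 2$ vertices on the other, which Case~A forbids. So in Case~A, $V_4$ is independent and no edge at a degree-$4$ vertex lies on any link. Hence all $4|V_4|$ such edges are contractible, and $|E_4|\ge\max\{4|V_4|,\,3|G|-6-4|V_4|\}\ge\frac{3|G|-6}{2}\ge\frac34|G|$. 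No discharging is needed.

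Case~B is where the theorem lives, and the step fails as you set it up.

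First, $C$ lies in both pieces, so $|G_1|+|G_2|=|G|+6$. The inductive bounds therefore sum to $\frac34|G|+\frac92$, and the affordable loss at $C$ is $\frac92$, not $\frac32$.

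More seriously, the only separating $4$-cycles of $G_1$ through $z_1$ are $z_1awc$ and $z_1bwd$ with $w$ strictly inside $C$. Suppose neither $\{a,c\}$ nor $\{b,d\}$ has a common neighbour strictly inside $C$. Then all four spokes at $z_1$ are contractible in $G_1$, and an edge with an end inside $C$ is contractible in $G$ if and only if it is contractible in $G_1$. If this happens on both sides, then $|E_4(G)|=|E_4(G_1)|+|E_4(G_2)|-8$ exactly. It does happen, for example when two square antiprisms, each capped at one end, are glued along their open $4$-cycles. The hypothesis $|E_4(G_i)|\ge\frac34|G_i|$ then gives only $\frac34|G|-\frac72$, and there are no ``edges near $C$'' to substitute.

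In addition, a side with exactly two vertices makes $G_i\cong{\rm DW}_5$, which has only $5<\frac{21}{4}$ contractible edges. So the inductive bound is not even available for that piece. This, rather than the octahedron, is why order at least $8$ is needed, and it is exactly the piece that choosing a minimal side tends to produce.

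Consequently, the boundary-relative strengthening you defer to the end is not a refinement; it is the whole proof, and it is never formulated. Taking the inside of $C$ as small as possible does help. If a diagonal pair has a common inner neighbour, it forces $G_1\in\{{\rm DW}_5,{\rm DW}_6\}$. Otherwise it puts $G_1$ in Case~A with a surplus of $\frac34|G_1|-3$. The accounting at $C$ still has to be done, including the spokes at $z_2$ on the $G_2$ side. It must also handle edges $ax$, where $x$ is a common neighbour of $a,c$ on one side only: these are non-contractible in that piece but can be contractible in $G$.
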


It was proved that there are infinitely many $4$-connected graphs which attain the equality in Theorems~\ref{thm:AE} or \ref{thm:MHV}.
In particular, Ando and Egawa~\cite{AE} constructed an infinite family of $4$-connected plane graphs
with $|E_4| = |V_{\ge 5}|$.
In this paper, we prove that 
if $G$ is a $4$-connected plane triangulation of order at least $7$, 
then we have $|E_4| \ge |V_{\ge 5}|+2$.
Also, we characterize extremal graphs.

Let $G_0$ be the plane triangulation as depicted in Figure~\ref{fig_2}.
Remark that for this triangulation, we have $|E_4|=|V_{\ge 5}|+2$ 
because it contains four vertices in $V_{\ge 5}$ and six edges in $E_4$.

\begin{figure}[htb]
\begin{center}
{\unitlength 0.1in%
\begin{picture}(20.1000,18.4100)(11.9800,-36.3100)%
%
\special{pn 8}%
\special{pa 1591 2199}%
\special{pa 2391 2999}%
\special{fp}%
\special{pa 2391 2999}%
\special{pa 2391 2999}%
\special{fp}%
%
\special{pn 8}%
\special{pa 2391 2999}%
\special{pa 2391 2199}%
\special{fp}%
\special{pa 2391 2199}%
\special{pa 1591 2199}%
\special{fp}%
\special{pa 1591 2199}%
\special{pa 1591 2999}%
\special{fp}%
\special{pa 1591 2999}%
\special{pa 2391 2999}%
\special{fp}%
%
\special{sh 1.000}%
\special{ia 1581 2209 41 41 0.0000000 6.2831853}%
\special{pn 8}%
\special{ar 1581 2209 41 41 0.0000000 6.2831853}%
%
\special{sh 1.000}%
\special{ia 2381 3009 41 41 0.0000000 6.2831853}%
\special{pn 8}%
\special{ar 2381 3009 41 41 0.0000000 6.2831853}%
%
\special{sh 1.000}%
\special{ia 1581 3009 41 41 0.0000000 6.2831853}%
\special{pn 8}%
\special{ar 1581 3009 41 41 0.0000000 6.2831853}%
%
\special{sh 1.000}%
\special{ia 2381 2209 41 41 0.0000000 6.2831853}%
\special{pn 8}%
\special{ar 2381 2209 41 41 0.0000000 6.2831853}%
%
\special{pn 8}%
\special{pa 2391 2199}%
\special{pa 1791 2399}%
\special{fp}%
\special{pa 1791 2399}%
\special{pa 1591 2999}%
\special{fp}%
\special{pa 1591 2999}%
\special{pa 2191 2799}%
\special{fp}%
\special{pa 2191 2799}%
\special{pa 2391 2199}%
\special{fp}%
%
\special{sh 0}%
\special{ia 2181 2809 41 41 0.0000000 6.2831853}%
\special{pn 8}%
\special{ar 2181 2809 41 41 0.0000000 6.2831853}%
%
\special{sh 0}%
\special{ia 1781 2409 41 41 0.0000000 6.2831853}%
\special{pn 8}%
\special{ar 1781 2409 41 41 0.0000000 6.2831853}%
%
\special{pn 8}%
\special{pa 2391 2199}%
\special{pa 2417 2227}%
\special{pa 2442 2256}%
\special{pa 2468 2284}%
\special{pa 2518 2340}%
\special{pa 2542 2369}%
\special{pa 2566 2397}%
\special{pa 2589 2425}%
\special{pa 2612 2454}%
\special{pa 2654 2510}%
\special{pa 2674 2539}%
\special{pa 2693 2567}%
\special{pa 2711 2596}%
\special{pa 2727 2624}%
\special{pa 2742 2653}%
\special{pa 2756 2682}%
\special{pa 2769 2710}%
\special{pa 2779 2739}%
\special{pa 2788 2768}%
\special{pa 2796 2797}%
\special{pa 2801 2826}%
\special{pa 2805 2855}%
\special{pa 2806 2884}%
\special{pa 2806 2913}%
\special{pa 2803 2942}%
\special{pa 2798 2971}%
\special{pa 2791 3000}%
\special{pa 2781 3030}%
\special{pa 2769 3059}%
\special{pa 2755 3088}%
\special{pa 2738 3117}%
\special{pa 2720 3145}%
\special{pa 2700 3173}%
\special{pa 2679 3200}%
\special{pa 2656 3225}%
\special{pa 2632 3250}%
\special{pa 2607 3274}%
\special{pa 2580 3296}%
\special{pa 2553 3317}%
\special{pa 2525 3336}%
\special{pa 2497 3353}%
\special{pa 2468 3369}%
\special{pa 2439 3382}%
\special{pa 2409 3393}%
\special{pa 2380 3402}%
\special{pa 2351 3408}%
\special{pa 2322 3412}%
\special{pa 2292 3414}%
\special{pa 2263 3414}%
\special{pa 2234 3411}%
\special{pa 2205 3407}%
\special{pa 2177 3401}%
\special{pa 2148 3393}%
\special{pa 2119 3383}%
\special{pa 2090 3372}%
\special{pa 2062 3359}%
\special{pa 2033 3344}%
\special{pa 2004 3328}%
\special{pa 1976 3311}%
\special{pa 1947 3293}%
\special{pa 1919 3274}%
\special{pa 1890 3254}%
\special{pa 1834 3210}%
\special{pa 1805 3187}%
\special{pa 1777 3164}%
\special{pa 1749 3140}%
\special{pa 1720 3115}%
\special{pa 1636 3040}%
\special{pa 1607 3014}%
\special{pa 1591 2999}%
\special{fp}%
%
\special{pn 8}%
\special{pa 2781 3009}%
\special{pa 2381 3009}%
\special{fp}%
%
\special{pn 8}%
\special{pa 2391 2999}%
\special{pa 2391 3399}%
\special{fp}%
%
\special{pn 8}%
\special{pa 2791 2999}%
\special{pa 2869 2909}%
\special{pa 2944 2819}%
\special{pa 2968 2789}%
\special{pa 2991 2760}%
\special{pa 3014 2730}%
\special{pa 3036 2700}%
\special{pa 3057 2671}%
\special{pa 3077 2642}%
\special{pa 3096 2612}%
\special{pa 3114 2583}%
\special{pa 3130 2554}%
\special{pa 3146 2526}%
\special{pa 3159 2497}%
\special{pa 3172 2469}%
\special{pa 3182 2441}%
\special{pa 3191 2413}%
\special{pa 3198 2385}%
\special{pa 3204 2357}%
\special{pa 3207 2330}%
\special{pa 3208 2303}%
\special{pa 3207 2276}%
\special{pa 3204 2250}%
\special{pa 3198 2224}%
\special{pa 3191 2198}%
\special{pa 3180 2172}%
\special{pa 3167 2147}%
\special{pa 3152 2122}%
\special{pa 3135 2098}%
\special{pa 3116 2074}%
\special{pa 3095 2051}%
\special{pa 3072 2028}%
\special{pa 3048 2006}%
\special{pa 3022 1985}%
\special{pa 2995 1965}%
\special{pa 2966 1945}%
\special{pa 2937 1927}%
\special{pa 2906 1909}%
\special{pa 2875 1892}%
\special{pa 2842 1877}%
\special{pa 2810 1862}%
\special{pa 2776 1849}%
\special{pa 2743 1837}%
\special{pa 2709 1826}%
\special{pa 2675 1816}%
\special{pa 2641 1808}%
\special{pa 2607 1802}%
\special{pa 2574 1797}%
\special{pa 2541 1793}%
\special{pa 2508 1791}%
\special{pa 2475 1790}%
\special{pa 2443 1790}%
\special{pa 2411 1792}%
\special{pa 2379 1795}%
\special{pa 2317 1805}%
\special{pa 2286 1812}%
\special{pa 2255 1820}%
\special{pa 2225 1829}%
\special{pa 2165 1849}%
\special{pa 2135 1861}%
\special{pa 2106 1874}%
\special{pa 2076 1887}%
\special{pa 2047 1901}%
\special{pa 2018 1916}%
\special{pa 1960 1948}%
\special{pa 1932 1965}%
\special{pa 1903 1982}%
\special{pa 1847 2018}%
\special{pa 1818 2037}%
\special{pa 1762 2075}%
\special{pa 1678 2135}%
\special{pa 1651 2155}%
\special{pa 1623 2176}%
\special{pa 1595 2196}%
\special{pa 1591 2199}%
\special{fp}%
%
\special{pn 8}%
\special{pa 2400 3400}%
\special{pa 2367 3417}%
\special{pa 2334 3435}%
\special{pa 2302 3452}%
\special{pa 2269 3468}%
\special{pa 2236 3485}%
\special{pa 2204 3501}%
\special{pa 2171 3516}%
\special{pa 2139 3531}%
\special{pa 2075 3559}%
\special{pa 2011 3583}%
\special{pa 1980 3593}%
\special{pa 1948 3603}%
\special{pa 1917 3611}%
\special{pa 1886 3618}%
\special{pa 1856 3624}%
\special{pa 1825 3628}%
\special{pa 1795 3630}%
\special{pa 1765 3631}%
\special{pa 1736 3630}%
\special{pa 1707 3628}%
\special{pa 1678 3623}%
\special{pa 1649 3617}%
\special{pa 1621 3608}%
\special{pa 1594 3597}%
\special{pa 1566 3584}%
\special{pa 1540 3569}%
\special{pa 1513 3552}%
\special{pa 1488 3534}%
\special{pa 1463 3513}%
\special{pa 1438 3491}%
\special{pa 1415 3467}%
\special{pa 1392 3442}%
\special{pa 1371 3416}%
\special{pa 1350 3388}%
\special{pa 1330 3360}%
\special{pa 1294 3300}%
\special{pa 1278 3269}%
\special{pa 1263 3237}%
\special{pa 1250 3205}%
\special{pa 1238 3173}%
\special{pa 1227 3140}%
\special{pa 1218 3108}%
\special{pa 1211 3075}%
\special{pa 1205 3042}%
\special{pa 1201 3010}%
\special{pa 1199 2978}%
\special{pa 1198 2946}%
\special{pa 1199 2915}%
\special{pa 1202 2884}%
\special{pa 1207 2853}%
\special{pa 1213 2823}%
\special{pa 1220 2793}%
\special{pa 1229 2763}%
\special{pa 1239 2733}%
\special{pa 1250 2704}%
\special{pa 1276 2646}%
\special{pa 1291 2618}%
\special{pa 1307 2589}%
\special{pa 1323 2561}%
\special{pa 1341 2533}%
\special{pa 1359 2506}%
\special{pa 1378 2478}%
\special{pa 1398 2450}%
\special{pa 1418 2423}%
\special{pa 1460 2369}%
\special{pa 1482 2341}%
\special{pa 1504 2314}%
\special{pa 1526 2288}%
\special{pa 1549 2261}%
\special{pa 1571 2234}%
\special{pa 1594 2207}%
\special{pa 1600 2200}%
\special{fp}%
%
\special{sh 0}%
\special{ia 2380 3400 41 41 0.0000000 6.2831853}%
\special{pn 8}%
\special{ar 2380 3400 41 41 0.0000000 6.2831853}%
%
\special{sh 0}%
\special{ia 2780 3000 41 41 0.0000000 6.2831853}%
\special{pn 8}%
\special{ar 2780 3000 41 41 0.0000000 6.2831853}%
\end{picture}}%

\caption{The plane graph $G_0$ (white vertices indicate $V_4$ and black vertices indicate $V_{\ge 5}$).}
\label{fig_2}
\end{center}
\end{figure}
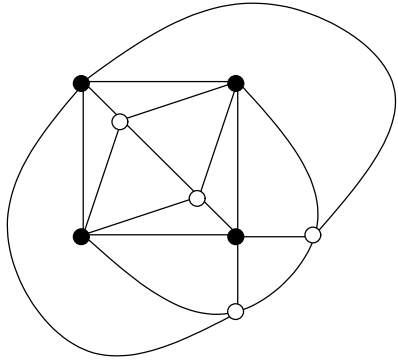

We define two transformations as depicted in Figure~\ref{fig_3}.
Let $C=c_1c_2c_3c_4c_1$ be a $4$-cycle such that the interior of $C$ contains 
exactly two vertices $d_1, d_2 \in V_4$ and $c_i \in V_{\ge 5}$ for each $i$.
By symmetry, we suppose that 
$\{ c_1d_1, c_2d_1, c_2d_2, c_3d_2, c_4d_1, c_4d_2, d_1d_2 \} \subset E(G)$.
The {\sl extension 1} is the vertex splitting $d_2$ into $d'_2, d''_2$ such that 
$d'_2, d''_2$ become vertices of degree $4$ and $d_1$ becomes a vertex of degree $5$
in the resulting graph.
The inverse operation of the extension 1 is called the {\sl contraction 1}.
Note that we can apply contraction 1 only if $c_3$ has degree at least $6$.

The {\sl extension 2} is the vertex splitting $c_2$ into $c'_2, c''_2$ such that 
$c'_2$ becomes a vertex of degree at least $5$ and $c''_2$ becomes a vertex of degree $5$
in the resulting graph.
Note that we can apply the extension 2 when $c_2$ has degree at least $6$.
The inverse operation of the extension 2 is the {\sl contraction 2}.
Note that we can apply the contraction 2 only if both $c_1$ and $c_3$ have degree at least $6$.

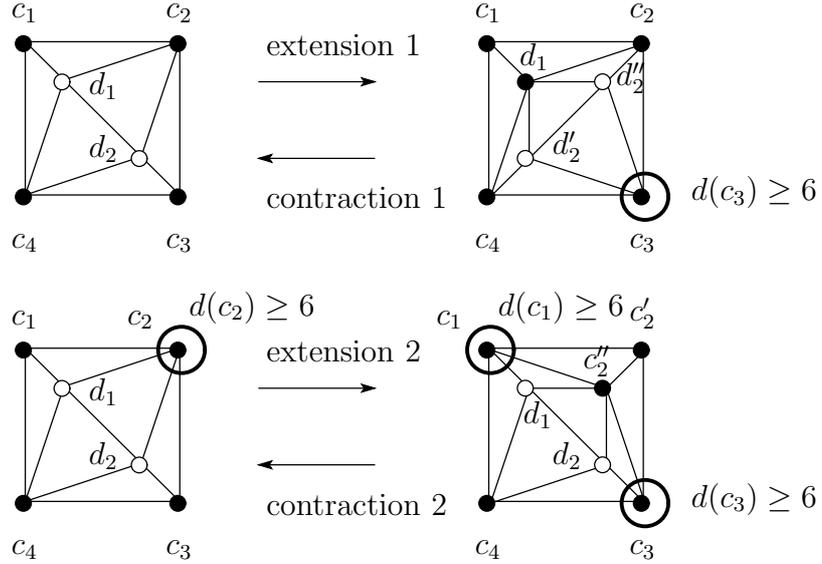
\begin{figure}[htb]
\begin{center}
{\unitlength 0.1in%
\begin{picture}(35.2000,28.0000)(7.2000,-29.6000)%
%
\special{pn 8}%
\special{pa 791 391}%
\special{pa 1591 1191}%
\special{fp}%
\special{pa 1591 1191}%
\special{pa 1591 1191}%
\special{fp}%
%
\special{pn 8}%
\special{pa 1591 1191}%
\special{pa 1591 391}%
\special{fp}%
\special{pa 1591 391}%
\special{pa 791 391}%
\special{fp}%
\special{pa 791 391}%
\special{pa 791 1191}%
\special{fp}%
\special{pa 791 1191}%
\special{pa 1591 1191}%
\special{fp}%
%
\special{sh 1.000}%
\special{ia 781 401 41 41 0.0000000 6.2831853}%
\special{pn 8}%
\special{ar 781 401 41 41 0.0000000 6.2831853}%
%
\special{sh 1.000}%
\special{ia 1581 1201 41 41 0.0000000 6.2831853}%
\special{pn 8}%
\special{ar 1581 1201 41 41 0.0000000 6.2831853}%
%
\special{sh 1.000}%
\special{ia 781 1201 41 41 0.0000000 6.2831853}%
\special{pn 8}%
\special{ar 781 1201 41 41 0.0000000 6.2831853}%
%
\special{sh 1.000}%
\special{ia 1581 401 41 41 0.0000000 6.2831853}%
\special{pn 8}%
\special{ar 1581 401 41 41 0.0000000 6.2831853}%
%
\special{pn 8}%
\special{pa 1591 391}%
\special{pa 991 591}%
\special{fp}%
\special{pa 991 591}%
\special{pa 791 1191}%
\special{fp}%
\special{pa 791 1191}%
\special{pa 1391 991}%
\special{fp}%
\special{pa 1391 991}%
\special{pa 1591 391}%
\special{fp}%
%
\special{sh 0}%
\special{ia 1381 1001 41 41 0.0000000 6.2831853}%
\special{pn 8}%
\special{ar 1381 1001 41 41 0.0000000 6.2831853}%
%
\special{sh 0}%
\special{ia 981 601 41 41 0.0000000 6.2831853}%
\special{pn 8}%
\special{ar 981 601 41 41 0.0000000 6.2831853}%
%
\special{pn 8}%
\special{pa 2000 600}%
\special{pa 2600 600}%
\special{fp}%
\special{sh 1}%
\special{pa 2600 600}%
\special{pa 2533 580}%
\special{pa 2547 600}%
\special{pa 2533 620}%
\special{pa 2600 600}%
\special{fp}%
\special{pa 2600 1000}%
\special{pa 2000 1000}%
\special{fp}%
\special{sh 1}%
\special{pa 2000 1000}%
\special{pa 2067 1020}%
\special{pa 2053 1000}%
\special{pa 2067 980}%
\special{pa 2000 1000}%
\special{fp}%
\put(20.4000,-4.6000){\makebox(0,0)[lb]{extension 1}}%
\put(20.4000,-12.6000){\makebox(0,0)[lb]{contraction 1}}%
%
\special{pn 8}%
\special{pa 3991 1191}%
\special{pa 3991 391}%
\special{fp}%
\special{pa 3991 391}%
\special{pa 3191 391}%
\special{fp}%
\special{pa 3191 391}%
\special{pa 3191 1191}%
\special{fp}%
\special{pa 3191 1191}%
\special{pa 3991 1191}%
\special{fp}%
%
\special{sh 1.000}%
\special{ia 3181 401 41 41 0.0000000 6.2831853}%
\special{pn 8}%
\special{ar 3181 401 41 41 0.0000000 6.2831853}%
%
\special{sh 1.000}%
\special{ia 3981 1201 41 41 0.0000000 6.2831853}%
\special{pn 8}%
\special{ar 3981 1201 41 41 0.0000000 6.2831853}%
%
\special{sh 1.000}%
\special{ia 3181 1201 41 41 0.0000000 6.2831853}%
\special{pn 8}%
\special{ar 3181 1201 41 41 0.0000000 6.2831853}%
%
\special{sh 1.000}%
\special{ia 3981 401 41 41 0.0000000 6.2831853}%
\special{pn 8}%
\special{ar 3981 401 41 41 0.0000000 6.2831853}%
%
\special{sh 1.000}%
\special{ia 3381 601 41 41 0.0000000 6.2831853}%
\special{pn 8}%
\special{ar 3381 601 41 41 0.0000000 6.2831853}%
%
\special{pn 8}%
\special{pa 3200 400}%
\special{pa 3400 600}%
\special{fp}%
\special{pa 3400 600}%
\special{pa 3200 1200}%
\special{fp}%
\special{pa 4000 400}%
\special{pa 3400 600}%
\special{fp}%
%
\special{pn 8}%
\special{pa 3200 1200}%
\special{pa 4000 400}%
\special{fp}%
%
\special{pn 8}%
\special{pa 3400 600}%
\special{pa 3800 600}%
\special{fp}%
\special{pa 3800 600}%
\special{pa 4000 1200}%
\special{fp}%
\special{pa 4000 1200}%
\special{pa 3400 1000}%
\special{fp}%
\special{pa 3400 1000}%
\special{pa 3400 600}%
\special{fp}%
%
\special{sh 0}%
\special{ia 3380 1000 41 41 0.0000000 6.2831853}%
\special{pn 8}%
\special{ar 3380 1000 41 41 0.0000000 6.2831853}%
%
\special{sh 0}%
\special{ia 3780 600 41 41 0.0000000 6.2831853}%
\special{pn 8}%
\special{ar 3780 600 41 41 0.0000000 6.2831853}%
%
\special{pn 8}%
\special{pa 791 1991}%
\special{pa 1591 2791}%
\special{fp}%
\special{pa 1591 2791}%
\special{pa 1591 2791}%
\special{fp}%
%
\special{pn 8}%
\special{pa 1591 2791}%
\special{pa 1591 1991}%
\special{fp}%
\special{pa 1591 1991}%
\special{pa 791 1991}%
\special{fp}%
\special{pa 791 1991}%
\special{pa 791 2791}%
\special{fp}%
\special{pa 791 2791}%
\special{pa 1591 2791}%
\special{fp}%
%
\special{sh 1.000}%
\special{ia 781 2001 41 41 0.0000000 6.2831853}%
\special{pn 8}%
\special{ar 781 2001 41 41 0.0000000 6.2831853}%
%
\special{sh 1.000}%
\special{ia 1581 2801 41 41 0.0000000 6.2831853}%
\special{pn 8}%
\special{ar 1581 2801 41 41 0.0000000 6.2831853}%
%
\special{sh 1.000}%
\special{ia 781 2801 41 41 0.0000000 6.2831853}%
\special{pn 8}%
\special{ar 781 2801 41 41 0.0000000 6.2831853}%
%
\special{sh 1.000}%
\special{ia 1581 2001 41 41 0.0000000 6.2831853}%
\special{pn 8}%
\special{ar 1581 2001 41 41 0.0000000 6.2831853}%
%
\special{pn 8}%
\special{pa 1591 1991}%
\special{pa 991 2191}%
\special{fp}%
\special{pa 991 2191}%
\special{pa 791 2791}%
\special{fp}%
\special{pa 791 2791}%
\special{pa 1391 2591}%
\special{fp}%
\special{pa 1391 2591}%
\special{pa 1591 1991}%
\special{fp}%
%
\special{sh 0}%
\special{ia 1381 2601 41 41 0.0000000 6.2831853}%
\special{pn 8}%
\special{ar 1381 2601 41 41 0.0000000 6.2831853}%
%
\special{sh 0}%
\special{ia 981 2201 41 41 0.0000000 6.2831853}%
\special{pn 8}%
\special{ar 981 2201 41 41 0.0000000 6.2831853}%
%
\special{pn 8}%
\special{pa 2000 2200}%
\special{pa 2600 2200}%
\special{fp}%
\special{sh 1}%
\special{pa 2600 2200}%
\special{pa 2533 2180}%
\special{pa 2547 2200}%
\special{pa 2533 2220}%
\special{pa 2600 2200}%
\special{fp}%
\special{pa 2600 2600}%
\special{pa 2000 2600}%
\special{fp}%
\special{sh 1}%
\special{pa 2000 2600}%
\special{pa 2067 2620}%
\special{pa 2053 2600}%
\special{pa 2067 2580}%
\special{pa 2000 2600}%
\special{fp}%
\put(20.4000,-20.6000){\makebox(0,0)[lb]{extension 2}}%
\put(20.4000,-28.6000){\makebox(0,0)[lb]{contraction 2}}%
%
\special{pn 8}%
\special{pa 3991 2791}%
\special{pa 3991 1991}%
\special{fp}%
\special{pa 3991 1991}%
\special{pa 3191 1991}%
\special{fp}%
\special{pa 3191 1991}%
\special{pa 3191 2791}%
\special{fp}%
\special{pa 3191 2791}%
\special{pa 3991 2791}%
\special{fp}%
%
\special{sh 1.000}%
\special{ia 3181 2001 41 41 0.0000000 6.2831853}%
\special{pn 8}%
\special{ar 3181 2001 41 41 0.0000000 6.2831853}%
%
\special{sh 1.000}%
\special{ia 3981 2801 41 41 0.0000000 6.2831853}%
\special{pn 8}%
\special{ar 3981 2801 41 41 0.0000000 6.2831853}%
%
\special{sh 1.000}%
\special{ia 3181 2801 41 41 0.0000000 6.2831853}%
\special{pn 8}%
\special{ar 3181 2801 41 41 0.0000000 6.2831853}%
%
\special{sh 1.000}%
\special{ia 3981 2001 41 41 0.0000000 6.2831853}%
\special{pn 8}%
\special{ar 3981 2001 41 41 0.0000000 6.2831853}%
%
\special{pn 8}%
\special{pa 3200 2000}%
\special{pa 3800 2200}%
\special{fp}%
\special{pa 3800 2200}%
\special{pa 4000 2800}%
\special{fp}%
\special{pa 4000 2800}%
\special{pa 3200 2000}%
\special{fp}%
%
\special{pn 8}%
\special{pa 3800 2200}%
\special{pa 3400 2200}%
\special{fp}%
\special{pa 3400 2200}%
\special{pa 3200 2800}%
\special{fp}%
\special{pa 3200 2800}%
\special{pa 3800 2600}%
\special{fp}%
\special{pa 3800 2600}%
\special{pa 3800 2200}%
\special{fp}%
%
\special{pn 8}%
\special{pa 3800 2200}%
\special{pa 4000 2000}%
\special{fp}%
%
\special{sh 1.000}%
\special{ia 3780 2200 41 41 0.0000000 6.2831853}%
\special{pn 8}%
\special{ar 3780 2200 41 41 0.0000000 6.2831853}%
%
\special{sh 0}%
\special{ia 3380 2200 41 41 0.0000000 6.2831853}%
\special{pn 8}%
\special{ar 3380 2200 41 41 0.0000000 6.2831853}%
%
\special{sh 0}%
\special{ia 3780 2600 41 41 0.0000000 6.2831853}%
\special{pn 8}%
\special{ar 3780 2600 41 41 0.0000000 6.2831853}%
%
\special{pn 20}%
\special{ar 4000 1200 121 121 0.0000000 6.2831853}%
\put(42.4000,-12.6000){\makebox(0,0)[lb]{$d(c_3) \ge 6$}}%
%
\special{pn 20}%
\special{ar 3200 2000 121 121 0.0000000 6.2831853}%
%
\special{pn 20}%
\special{ar 4000 2800 121 121 0.0000000 6.2831853}%
\put(32.4000,-18.6000){\makebox(0,0)[lb]{$d(c_1) \ge 6$}}%
\put(42.4000,-28.6000){\makebox(0,0)[lb]{$d(c_3) \ge 6$}}%
%
\special{pn 20}%
\special{ar 1600 2000 121 121 0.0000000 6.2831853}%
\put(16.4000,-18.6000){\makebox(0,0)[lb]{$d(c_2) \ge 6$}}%
\put(7.2000,-2.9000){\makebox(0,0)[lb]{$c_1$}}%
\put(15.2000,-2.9000){\makebox(0,0)[lb]{$c_2$}}%
\put(31.2000,-2.9000){\makebox(0,0)[lb]{$c_1$}}%
\put(39.2000,-2.9000){\makebox(0,0)[lb]{$c_2$}}%
\put(29.2000,-18.9000){\makebox(0,0)[lb]{$c_1$}}%
\put(39.2000,-18.9000){\makebox(0,0)[lb]{$c'_2$}}%
\put(7.2000,-18.9000){\makebox(0,0)[lb]{$c_1$}}%
\put(13.2000,-18.9000){\makebox(0,0)[lb]{$c_2$}}%
\put(7.2000,-14.9000){\makebox(0,0)[lb]{$c_4$}}%
\put(15.2000,-14.9000){\makebox(0,0)[lb]{$c_3$}}%
\put(7.2000,-30.9000){\makebox(0,0)[lb]{$c_4$}}%
\put(15.2000,-30.9000){\makebox(0,0)[lb]{$c_3$}}%
\put(31.2000,-30.9000){\makebox(0,0)[lb]{$c_4$}}%
\put(39.2000,-30.9000){\makebox(0,0)[lb]{$c_3$}}%
\put(31.2000,-14.9000){\makebox(0,0)[lb]{$c_4$}}%
\put(39.2000,-14.9000){\makebox(0,0)[lb]{$c_3$}}%
\put(11.2000,-6.9000){\makebox(0,0)[lb]{$d_1$}}%
\put(11.2000,-10.2000){\makebox(0,0)[lb]{$d_2$}}%
\put(11.2000,-22.9000){\makebox(0,0)[lb]{$d_1$}}%
\put(11.2000,-26.2000){\makebox(0,0)[lb]{$d_2$}}%
\put(33.5000,-5.4000){\makebox(0,0)[lb]{$d_1$}}%
\put(35.2000,-10.2000){\makebox(0,0)[lb]{$d'_2$}}%
\put(38.5000,-6.6000){\makebox(0,0)[lb]{$d''_2$}}%
\put(33.7000,-24.1000){\makebox(0,0)[lb]{$d_1$}}%
\put(35.2000,-26.2000){\makebox(0,0)[lb]{$d_2$}}%
\put(36.8000,-21.4000){\makebox(0,0)[lb]{$c''_2$}}%
\end{picture}}%

\caption{Two transformations (white vertices indicate $V_4$ and black vertices indicate $V_{\ge 5}$ ).}
\label{fig_3}
\end{center}
\end{figure}

Let $\mathcal{G}$ be the set of plane triangulations obtained from $G_0$ by repeatedly applying extensions 1 or 2.
Remark that $G_0 \in \mathcal{G}$.

The following is our main theorem.

\begin{thm}\label{thm:main2}
Let $G$ be a $4$-connected plane triangulation of order at least $7$.
Then we have $|E_4| \ge |V_{\ge 5}|+2$. 
Moreover, we have $|E_4| = |V_{\ge 5}|+2$ 
if and only if $G \in \mathcal{G}$. 
\end{thm}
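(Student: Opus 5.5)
We begin by characterizing non-contractible edges in a $4$-connected plane triangulation $G$ with $|G|\ge 7$. An edge $xy$ fails to be $4$-contractible exactly when it lies in a separating $4$-cycle or in a $3$-cut after contraction. In a plane triangulation this means $xy$ lies on a separating $4$-cycle $xyzw$. Equivalently, $x$ and $y$ have a common neighbour pair, i.e.\ a non-facial triangle through the contracted vertex. Since $G$ is $4$-connected, every separating $4$-cycle $C$ has at least one vertex on each side.

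Our first step is to show that every edge incident to a vertex of degree $4$ is non-contractible. Indeed, the link of $v\in V_4$ is a separating $4$-cycle when $|G|\ge 7$, and each edge $vu$ lies on a separating $4$-cycle through the two neighbours of $v$ adjacent to $u$. Hence $E_4\subseteq E(G[V_{\ge 5}])$. We also record that $V_4$ is independent, or at least only contains the local configurations pictured in Figures~\ref{fig_2} and~\ref{fig_3}: two adjacent degree-$4$ vertices $d_1d_2$ inside a $4$-cycle $c_1c_2c_3c_4$.

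The main counting step runs by a discharging/charge-assignment argument in the spirit of Ando and Egawa (Theorem~\ref{thm:AE}). We give each vertex $u\in V_{\ge 5}$ one unit and try to injectively assign to it a contractible edge incident to $u$, then find two extra contractible edges. The plan is as follows. For $u\in V_{\ge 5}$ with link cycle $u_1\dots u_k$ ($k\ge 5$), an edge $uu_i$ is non-contractible iff some $w\ne u_{i\pm1}$ is a common neighbour forming a separating $4$-cycle $uu_iwu_j$. Among all such separating $4$-cycles through $u$, we choose one that is innermost (minimal side). A standard minimality argument then shows that the edges from $u$ into this minimal side contain a contractible one, unless the minimal side is a single degree-$4$ vertex. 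By planarity, the separating $4$-cycles through $u$ are laminar, so if $u$ has degree at least $5$, at least two disjoint "sides" around $u$ each yield a contractible edge. This gives $u$ at least two contractible incident edges, except when $u$'s neighbourhood is dominated by degree-$4$ vertices.

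Summing over vertices, $2|E_4|\ge \sum_{u\in V_{\ge5}} c(u)$, where $c(u)$ is the number of contractible edges at $u$. So it suffices to show $\sum c(u)\ge 2|V_{\ge5}|+4$. We set up the defect $\delta(u)=2-c(u)$, which is positive only for vertices adjacent to degree-$4$ vertices in special positions. We then bound the total defect by a surplus coming from vertices with $c(u)\ge 3$, using Euler's formula ($\sum (6-d(v))=12$) to produce the excess of $4$.

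For the equality case, we show that tightness forces every vertex to satisfy $c(u)=2$ except in exactly the configurations of $G_0$, and that $V_4$ consists of pairs $d_1d_2$ in $4$-cycles as in Figure~\ref{fig_3}. We then argue by induction on $|G|$: if $G\ne G_0$ is extremal, some contraction 1 or 2 is applicable. Applying it yields a smaller $4$-connected triangulation $G'$ with $|E_4(G')|-|V_{\ge5}(G')|=|E_4(G)|-|V_{\ge5}(G)|$, after checking locally how contractibility changes. By induction $G'\in\mathcal{G}$, hence $G\in\mathcal{G}$. Conversely, we check directly that extensions 1 and 2 each add exactly one vertex of $V_{\ge5}$ and one contractible edge (or zero and zero), preserving equality from $G_0$.

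The main obstacle is the defect accounting near degree-$4$ vertices. Vertices adjacent to several degree-$4$ vertices may have only one contractible edge, and we must show this deficit is compensated globally and that the extra $+2$ always survives. We would first try a local charge transfer along each adjacent pair $d_1d_2$ of degree-$4$ vertices to the four vertices $c_i$, and analyze the resulting finite list of configurations.
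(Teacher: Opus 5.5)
\paragraph{Verdict: there is a genuine gap, and it is in your first step.}
The claim that every edge at a degree-$4$ vertex is non-contractible, so that $E_4\subseteq E(G[V_{\ge 5}])$, is false.

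Let $v\in V_4$ have link $u_1u_2u_3u_4$. A $4$-cycle $vu_1wu_2$ through a link-neighbour $u_2$ of $u_1$ has the chord $u_1u_2$. It therefore bounds only the two faces $vu_1u_2$ and $u_1wu_2$ (otherwise $u_1wu_2$ would be a separating triangle), so it is not separating. Hence $vu_1$ is non-contractible exactly when $u_1$ and the \emph{opposite} vertex $u_3$ have a common neighbour outside $\{v,u_2,u_4\}$, and nothing forces this.

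Two examples show the claim failing:
\begin{itemize}
\item In ${\rm DW}_n$ with $n\ge 5$, the only contractible edges are the $n$ rim edges, and these join two degree-$4$ vertices.
\item In the extremal graph $G_0$ itself, $G[V_{\ge5}]$ is a separating $4$-cycle, so none of its edges is contractible. The six edges of $E_4$ are the two edges joining adjacent degree-$4$ vertices, and the four edges joining a degree-$4$ vertex to the degree-$5$ vertex it does not share with its degree-$4$ partner. In particular $V_4$ is not independent there.
\end{itemize}

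Consequently the inequality you reduce to, $\sum_{u\in V_{\ge5}}c(u)\ge 2|V_{\ge5}|+4$, is false for graphs that satisfy the theorem. It reads $4\ge 12$ for $G_0$ and $0\ge 8$ for ${\rm DW}_n$. No defect/surplus bookkeeping or Euler-formula discharging can rescue a target that fails on the extremal graph.

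Your ``innermost side'' lemma fails for the same reason. At a degree-$5$ vertex of $G_0$, one side of the separating $4$-cycle through it contains two adjacent degree-$4$ vertices. Both edges from the vertex to them are non-contractible, and the contractible edge on that side is the edge between the two degree-$4$ vertices. The laminarity claim is also false: $vl_1yl_3v$ and $vl_2yl_4v$ can both be separating, and they cross.

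\paragraph{What is missing.}
You need a count that credits these edges. Each $v\in V_{\ge5}$ should count:
\begin{itemize}
\item contractible edges $vu$ with $u\in V_{\ge5}$ once;
\item contractible edges $vu$ with $u\in V_4$ twice, since no other vertex of $V_{\ge5}$ can claim them;
\item contractible edges between two degree-$4$ vertices of its link once, since only the apexes of the two faces on such an edge can claim it.
\end{itemize}
With this weight $w$ one has $2|E_4|\ge\sum_{v\in V_{\ge5}} w(v)$. The innermost-side lemma then becomes true: a minimal $k$-jump side with $k\ge 2$ either contains a contractible edge at $v$, or it is a ${\rm DW}^-_4$ that supplies a contractible link edge. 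This gives $w(v)\ge 2$ for every $v\in V_{\ge5}$.

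Even then, the surplus of $4$ does not come from Euler's formula. It comes from a minimal-counterexample argument. Contracting an edge inside a path of $V_4$ of order at least $3$, or the edge $p_1p_2$ of a $P_2$-component whose two common neighbours have degree at least $6$, lowers $|E_4|$ by one while keeping $|V_{\ge5}|$ unchanged. This, together with weight estimates on both sides of separating $4$-cycles with all vertices in $V_{\ge5}$, forces $G[V_4]$ to be an induced matching. It then forces contraction 1 or 2 to be applicable unless $G\cong G_0$. Your proposal uses induction only for the equality case and gives no mechanism that produces the $+4$ in the inequality itself.
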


Let ${\rm DW}_n$ denote the double wheel of order $n+2$, and let ${\rm DW}^{-}_n$ denote a graph obtained from ${\rm DW}_n$ by deleting an edge $x_1x_2$ such that $d_{{\rm DW}_n}(x_i)=4$ for each $i \in \{ 1,2 \}$.  

Since ${\rm DW}_4$ does not contain any $4$-contractible edges,
we assume $|G|\geq 7$ in Theorem~\ref{thm:main2}.
Remark that ${\rm DW}_4$ is the unique $4$-connected plane triangulation of order $6$.


\section{Proof of Theorem~\ref{thm:main2}}
\label{sec:main2}
First, we introduce a lemma that is needed in our proofs.

For a connected graph $G$, a cycle $C$ is called a {\sl separating cycle} if $G-V(C)$ contains at least two components.

\begin{lem}\label{lem:2.1}
Let $e$ be an edge of a $4$-connected plane triangulation $G$.
Then $e \notin E_4$ if and only if 
$e$ is contained in a separating $4$-cycle.
\end{lem}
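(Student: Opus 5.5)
We prove the two directions separately, writing $G/e$ for the graph obtained by contracting $e=xy$ into a new vertex $w$.

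\emph{A separating $4$-cycle forces non-contractibility.} Suppose $e=xy$ lies on a separating $4$-cycle $C=xyzux$. Then $G-V(C)$ has at least two components. In $G/e$, the set $\{w,z,u\}$ has only three vertices, and $G/e-\{w,z,u\}=G-V(C)$ is disconnected. Moreover, $G/e$ has $|G|-1\ge 5$ vertices, since $G-V(C)$ contains at least two vertices. Hence $G/e$ is not $4$-connected, and $e\notin E_4$.

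\emph{Non-contractibility forces a separating $4$-cycle.} Suppose $e=xy\notin E_4$. Since $|G|\ge 6$, the graph $G/e$ has at least $5$ vertices, so it has a cut set $S$ with $|S|\le 3$. If $w\notin S$, then $S$ is also a vertex set of $G$. Every path in $G-S$ between vertices outside $\{x,y\}$ maps to a path in $G/e-S$, so $G-S$ would be disconnected, contradicting $4$-connectivity. Hence $w\in S$. Let $T=(S-\{w\})\cup\{x,y\}$. Then $|T|\le 4$ and $G-T=G/e-S$ is disconnected. Since $G$ is $4$-connected, $|T|=4$, so $T=\{x,y,a,b\}$ is a minimal $4$-cut of $G$.

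It remains to show that the minimal $4$-cut $T$ induces a $4$-cycle through $xy$. We use the standard fact that in a plane triangulation, every minimal vertex cut $T$ induces a subgraph containing a cycle through all of $T$ which separates the plane; that is, $G[T]$ contains a spanning cycle that is separating. The reason is as follows. Take a component $K$ of $G-T$ and look at the faces incident with both $K$ and $T$. Each such face is a triangle, and its vertices outside $K$ lie in $T$. Walking around the boundary of the region occupied by $K$, consecutive vertices of $T$ are therefore adjacent. Minimality of $T$ ensures that every vertex of $T$ has a neighbor in every component, so this boundary walk visits all of $T$ and gives a closed walk in $G[T]$ through all four vertices.

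We now check that this closed walk is a genuine $4$-cycle. If it were a shorter cycle, we would obtain a separating triangle, that is, a $3$-cut, which is impossible. So the walk is a $4$-cycle $C$ on $T$, and it is separating because $K$ lies on one side and the other components of $G-T$ lie on the other.

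We still need $xy\in E(C)$. The edge $xy$ is a chord or an edge of $C$. If $x$ and $y$ were opposite on $C$, then the chord $xy$ would lie inside one side of $C$, say the side containing $K$. It would split that side into two triangles $x,y,a$ and $x,y,b$. Then $K$, being nonempty, would lie inside a separating triangle, contradicting $4$-connectivity. Hence $x$ and $y$ are consecutive on $C$, and $e$ lies on the separating $4$-cycle $C$.

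The main obstacle is the rigorous planar step showing that a minimal $4$-cut in a triangulation spans a separating $4$-cycle. We would handle it by taking the face boundary of the union of faces incident to $K$ and applying the argument above, with care for degenerate repetitions in the walk. Those repetitions are excluded because any repetition would yield a cut of size at most $3$.
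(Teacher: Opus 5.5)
Your proof is correct, but it is organized differently from the paper's.

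The paper contracts $e$ and observes that the resulting triangulation is not $4$-connected. By the standard criterion for triangulations, it therefore has a separating $3$-cycle. This triangle must pass through the contracted vertex, since otherwise it would already be a separating triangle of $G$. The paper then un-contracts it to a separating $4$-cycle through $e$.

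You instead keep working with vertex cuts. You lift the $3$-cut of $G/e$ to a $4$-cut $T=\{x,y,a,b\}$ of $G$, which is a purely combinatorial and airtight step. Only then do you use planarity, in $G$ itself: a minimal $4$-cut spans a separating $4$-cycle, and a further argument rules out $xy$ being a diagonal of that cycle.

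\textbf{What each route buys.} The paper's route is shorter. It reuses the well-known separating-triangle criterion, and $e\in E(C)$ comes almost for free from the un-contraction. However, its one-line lifting step tacitly needs that the two edges of the triangle at the contracted vertex come one from $x$ and one from $y$. Otherwise one obtains a triangle $xab$ of $G$ that must separately be shown to be separating. Your route makes the lifting trivial and puts all the topology into a single lemma about $G$. The price is that this lemma is only sketched.

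\textbf{Tightening the sketch.} The remark that ``repetitions yield a small cut'' is vague; you can replace it as follows. Let $F$ be the face of the connected plane graph $G-V(K)$ that contains $K$.
\begin{enumerate}
\item The boundary of $F$ lies in $G[T]$.
\item By minimality, every vertex of $T$ has a neighbour in $K$, so the boundary passes through all of $T$.
\item Another component $K'$ lies outside $F$, so some cycle of this boundary separates $K$ from $K'$.
\item A $3$-cycle there would be a separating triangle, so this cycle is a $4$-cycle on $T$.
\end{enumerate}
Your chord-exclusion argument then finishes the proof exactly as you wrote it.
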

\proof
If $e$ is contained in a separating $4$-cycle, then it is clear that $e \notin E_4$.
Thus, if part is proved. 
Now we prove only if part.
Let $G'$ be a plane triangulation 
obtained from $G$ by contracting $e$.
Since $e \notin E_4$, $G'$ is not $4$-connected, and hence $G'$ contains a separating $3$-cycle $C$.
If $C$ is contained in $G$, then $G$ is not $4$-connected, a contradiction. 
So, $C$ is corresponding to a separating $4$-cycle in $G$ containing $e$.
\qed

Now we prove Theorem~\ref{thm:main2}.

\noindent
{\bf Proof of Theorem~\ref{thm:main2}.}\\
Since both transformations (extensions 1 and 2) create one new vertex of $V_{\ge 5}$ and 
one new edge of $E_4$, we can see that if $G \in \mathcal{G}$, then we have $|E_4| = |V_{\ge 5}|+2$.
So it suffices to show that 
if $G$ is a $4$-connected plane triangulation of order at least $7$, 
then we have either $|E_4| > |V_{\ge 5}|+2$ or $G \in \mathcal{G}$. 

Let $G$ be a $4$-connected plane triangulation.
If $|G|=7$, then $G$ is isomorphic to ${\rm DW}_{5}$, 
and hence we have $|E_4| = 5 > |V_{\ge 5}|+2$.
Thus, we suppose $|G| \ge 8$.
By way of contradiction, we choose $G$ so that 
\begin{enumerate}[(1)]
\item $|E_4| \le |V_{\ge 5}|+2$, 
\item $G \notin \mathcal{G}$, and 
\item $|G|$ is as small as possible subject to (1) and (2). 
\end{enumerate}


Let $W_{n}$ denote the wheel of order $n+1$.
Since $G$ is a $4$-connected triangulation, for each $v \in V(G)$, $G[N_G(v)]$ is the induced cycle, which is called the {\sl link} of $v$.
For a cycle $C=c_1c_2 \ldots c_nc_1$, 
let $\overrightarrow{C}$ denote an {\sl orientation} of $C$, 
and let $l_i\overrightarrow{C}l_j$ denote the {\sl path from $l_i$ to $l_j$} on $\overrightarrow{C}$.
Let $v$ be a vertex of degree $p \ge 5$.
Since $G$ is $4$-connected, $G[N_G[v]]$ is isomorphic to $W_{p}$.
In $E(G[N_G[v]]) \cap E_4$,
\begin{enumerate}
\item let $F_{v,1}= \{ uv \in E_4 : u \in V_{\ge 5} \}$, 
\item let $F_{v,2}= \{ uv \in E_4 : u \in V_{4} \}$, and
\item let $F_{v,3}= \{ uu' \in E_4 : u, u' \in N_G(v) \cap V_{4} \}$.
\end{enumerate}


For each $i \in \{ 1, 2, 3 \}$,
let $w_i(v)$ be the number of edges in $F_{v, i}$.
We let 
\[
w(v)=w_1(v)+2w_2(v)+w_3(v).
\]

\begin{claim}\label{lem:4_0}
For $e\in E_4$, $|\{v\in V_{\geq 5}:e\in F_{v,i}$  for some $ i \}|\leq 2$.
Furthermore, if $e\in F_{v,2}$ for some $v\in V_{\geq 5}$, then $e\notin F_{x,i}$ for any $x\in V_{\geq 5}-\{v\}$ and any $i$.
\end{claim}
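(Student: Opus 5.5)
Fix $e = ab \in E_4$ and determine exactly for which $v \in V_{\ge 5}$ the edge $e$ can lie in $F_{v,i}$. Reading off the definitions: $e \in F_{v,1}\cup F_{v,2}$ means $v$ is an endpoint of $e$. In that case the other endpoint lies in $V_{\ge 5}$ (for $F_{v,1}$) or in $V_4$ (for $F_{v,2}$). On the other hand, $e \in F_{v,3}$ means both $a,b \in N_G(v)\cap V_4$, so $v$ is a common neighbour of $a$ and $b$ with $v \notin \{a,b\}$. Since $G$ is a triangulation in which every link is an induced cycle (by $4$-connectivity), the common neighbours of the adjacent vertices $a,b$ are exactly the two vertices $x,y$ forming the two triangular faces $abx$ and $aby$ incident with $e$. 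Indeed, a third common neighbour $z$ would give a separating $3$-cycle $abz$, which is impossible when $|G|\ge 8$ and $G$ is $4$-connected.

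The argument then splits according to the degrees of $a$ and $b$.

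\emph{Case $a,b\in V_{\ge5}$.} Then $F_{v,2}$ and $F_{v,3}$ cannot contain $e$. The set $\{v : e\in F_{v,1}\}$ is contained in $\{a,b\}$, so it has at most two elements.

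\emph{Case $a\in V_{\ge5}$, $b\in V_4$.} Then $e\notin F_{v,1}$ for every $v$, since both endpoints would have to lie in $V_{\ge 5}$. Also $e\notin F_{v,3}$, which requires both endpoints in $V_4$. Finally, $e\in F_{v,2}$ forces $v\in\{a,b\}\cap V_{\ge5}=\{a\}$. So exactly one vertex, namely $a$, qualifies. This case also proves the second sentence of the claim: if $e\in F_{v,2}$, then $e$ has one endpoint $v \in V_{\ge 5}$ and the other in $V_4$, hence we are in this case with $v=a$. Thus no $x\in V_{\ge5}-\{v\}$ has $e\in F_{x,i}$ for any $i$.

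\emph{Case $a,b\in V_4$.} Only $F_{v,3}$ can contain $e$, and then $v$ is a common neighbour of $a$ and $b$. So $v\in\{x,y\}$, giving at most two such $v$.

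The only point needing care is the assertion that the adjacent vertices $a,b$ have exactly two common neighbours. A third common neighbour $z$ would create a triangle $abz$ that is not a face. In a plane triangulation on at least five vertices, such a non-facial triangle separates its interior from its exterior, so $\{a,b,z\}$ would be a $3$-cut, contradicting $4$-connectedness. This is the main (though mild) obstacle; everything else is direct case analysis on the degrees of the endpoints.
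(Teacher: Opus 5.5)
Your proof is correct and essentially matches the paper's argument: a case split on which of $F_{v,1},F_{v,2},F_{v,3}$ can contain $e$ (equivalently, on the degrees of the endpoints of $e$), where the only nontrivial point is that two adjacent vertices of $V_4$ have exactly the two facial-triangle apices as common neighbours. Your justification of that point, that a third common neighbour would give a separating triangle and contradict $4$-connectivity, is more explicit than the paper's brief appeal to $G$ being a plane triangulation.
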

\proof
Suppose $v \in V_{\ge 5}$.
Suppose that $vu \in F_{v,1}$. Then $vu \in F_{u,1}$.
Thus $vu \notin F_{x, i}$ for any $i \in \{1, 2\}$ and $x \in V_{\ge 5} - \{ v, u\}$.
Moreover $vu \notin F_{x, 3}$ for any $x \in V_{\ge 5}$ because $d_G(v) \ge 5$.

Suppose that $vu \in F_{v,2}$. Then $vu \notin F_{x, 1}$ for any $x \in V_{\ge 5}-\{ v \}$
because $d_G(u)=4$.
Moreover $vu \notin F_{x, i}$ for any $i \in \{2, 3\}$ and $x \in V_{\ge 5} - \{ v \}$ 
because $d_G(v) \ge 5$.

Suppose that $uu' \in F_{v, 3}$. Then $uu' \notin F_{x, i}$ 
for any $i \in \{1, 2\}$ and  $x \in V_{\ge 5}$ 
because $d_G(u)=d_G(u')=4$.
Let $v'uu'$ be the triangular face such that $v \neq v'$. 
Although $uu'$ may be contained in $F_{v' , 3}$, 
$uu' \notin F_{x, 3}$ for $x \in V_{\ge 5} - \{ v, v' \}$
because $G$ is a plane triangulation.
\qed

\begin{claim}\label{lem:4_1}
Let $k$ be a nonnegative integer.
If $\sum_{v \in V_{\ge 5}}w(v) \ge 2|V_{\ge 5}|+k$, then $|E_4| \ge |V_{\ge 5}|+\frac{k}{2}$.
\end{claim}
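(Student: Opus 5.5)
We argue by double counting, charging each $4$-contractible edge to the vertices of $V_{\ge 5}$ that count it. Rewrite the sum as
\[
\sum_{v \in V_{\ge 5}} w(v)=\sum_{e \in E_4} c(e), \qquad c(e)=\sum_{v\in V_{\ge 5}}\bigl(|\{e\}\cap F_{v,1}|+2|\{e\}\cap F_{v,2}|+|\{e\}\cap F_{v,3}|\bigr).
\]
The whole task is then to show $c(e)\le 2$ for every $e\in E_4$. Once that is done,
\[
2|V_{\ge 5}|+k \le \sum_{v\in V_{\ge 5}} w(v) \le 2|E_4|,
\]
which gives $|E_4|\ge |V_{\ge 5}|+\frac{k}{2}$.

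To bound $c(e)$ we use Claim~\ref{lem:4_0} and split by the type of $e$. For each $v$, the sets $F_{v,1},F_{v,2},F_{v,3}$ are disjoint. Edges in $F_{v,1}\cup F_{v,2}$ are incident with $v$, while edges in $F_{v,3}$ join two neighbours of $v$ of degree $4$. Hence a fixed $e$ lies in at most one $F_{v,i}$ for each $v$.

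\begin{itemize}
\item If $e\in F_{v,2}$ for some $v$, the second part of Claim~\ref{lem:4_0} says $e$ lies in no $F_{x,i}$ with $x\ne v$. So $c(e)=2$.
\item Otherwise $e$ contributes weight $1$ to each $v$ that counts it. By the first part of Claim~\ref{lem:4_0}, at most two vertices of $V_{\ge 5}$ count it, so $c(e)\le 2$.
\end{itemize}

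Edges of $E_4$ not counted by any $v$ have $c(e)=0$, which only helps the inequality. No step here is a real obstacle, since Claim~\ref{lem:4_0} already does the combinatorial work. The only point needing care is that the weight $2$ on $F_{v,2}$ is compensated exactly by the exclusivity statement in Claim~\ref{lem:4_0}.
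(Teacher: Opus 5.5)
Your proposal is correct and is the same double-counting argument as the paper: Claim~\ref{lem:4_0} guarantees that each edge of $E_4$ contributes at most $2$ to $\sum_{v \in V_{\ge 5}} w(v)$, so $2|E_4| \ge 2|V_{\ge 5}|+k$. You make explicit two things the paper leaves implicit, namely the per-edge charge $c(e)$ and the disjointness of $F_{v,1},F_{v,2},F_{v,3}$ for a fixed $v$.
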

\proof
By Claim~\ref{lem:4_0}, for each $e \in E_4$, $|\{v\in V_{\geq 5}:e\in F_{v,i}$  for some $ i \}|\leq 2$. 
Moreover,  if $e \in F_{v,2}$, then $e\notin F_{x,i}$ for any $x\in V_{\geq 5}-\{v\}$ and any $i$.
Thus we have 
\[
2|E_4| \ge \sum_{v \in V_{\ge 5}}w(v) \ge 2|V_{\ge 5}|+k.
\]
So, $|E_4| \ge |V_{\ge 5}|+\frac{k}{2}$.
\qed

By the choice of $G$ and Claim~\ref{lem:4_1}, 
we have $\sum_{v \in V_{\ge 5}}w(v) \le 2|V_{\ge 5}|+4$.

For a fixed $v \in V_{\ge 5}$, 
let $L=l_1l_2 \ldots l_pl_1$ ($p \ge 5$) be the link of $v$.
A separating $4$-cycle $C$ containing $v, l_i, l_j$ 
($|i -j | \ge 2$ and $\{ i, j\} \neq \{ 1, p \}$) is called a
{\sl $k$-jump separating $4$-cycle on $v$} 
if either $|l_i\overrightarrow{L}l_j|=k+2$ or $|l_j\overrightarrow{L}l_i|=k+2$.
Moreover, if $|l_i\overrightarrow{L}l_j|=k+2$, then the region bounded by $C$ and containing $l_i\overrightarrow{L}l_j$ is called 
an {\sl inside region on $C$}.

\begin{claim}\label{lem:4_2}
Suppose $v \in V_{\ge 5}$.
For $k \ge 2$, if there exists a $k$-jump separating $4$-cycle $C$ on $v$, 
then the inside region of $C$ contains at least one edge $e \in F_{v,i}$ for some $i \in \{ 1, 2, 3\}$.
\end{claim}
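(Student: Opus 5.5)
The plan is an extremal choice of $C$ followed by repeated use of Lemma~\ref{lem:2.1}. Among all $k$-jump separating $4$-cycles on $v$ with $k\ge 2$, I would take $C=vl_ixl_jv$ whose inside region $R$ is minimal with respect to inclusion. It suffices to prove the claim for this minimal $C$: any other such cycle has an inside region containing the inside region of some minimal one. Write the path $l_i\overrightarrow{L}l_j$ as $l_il_{i+1}\cdots l_j$. The inner vertices $l_{i+1},\dots,l_{j-1}$, of which there are $k\ge 2$, all lie in $R$. Every edge $vl_m$ with $i<m<j$, and every edge $l_ml_{m+1}$ with $i<m<m+1<j$, lies in $R$. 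I will show that one of these edges lies in $E_4$ and has the required degree pattern.

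\emph{Step 1: the spokes $vl_m$, $i<m<j$.} If some $vl_m\in E_4$, then it lies in $F_{v,1}$ or $F_{v,2}$, according to whether $d_G(l_m)\ge 5$ or $d_G(l_m)=4$, and we are done. So assume no such spoke is in $E_4$. By Lemma~\ref{lem:2.1}, each $vl_m$ lies in a separating $4$-cycle $D_m=vl_my_ml_tv$ with $l_t\in N_G(v)$ and $l_t$ not adjacent to $l_m$ on $L$. By planarity, the path $l_my_ml_t$ avoids $v$ and starts inside $R$. Hence it stays in $R$, or leaves $R$ only through $x$ (so $y_m=x$), or ends at $l_i$ or $l_j$. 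In every case $D_m$ is a jump separating $4$-cycle on $v$ whose inside region is strictly contained in $R$. For $y_m=x$ one also has to check that $vl_mxl_i$ or $vl_mxl_j$ is genuinely separating; this is where $k\ge 2$ enters. By minimality of $C$, every such $D_m$ is a $1$-jump cycle $vl_{m-1}y_ml_{m+1}v$ or $vl_{m+1}y_ml_{m-1}v$. A separating $1$-jump cycle has only one vertex in its interior, namely $l_{m\pm1}$, so that vertex has degree $4$, with neighbours $v,l_{m\pm1\pm1},y_m$. Running this over all $m$ with $i<m<j$ should force every inner vertex $l_{i+1},\dots,l_{j-1}$ to have degree $4$. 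For the end vertices $l_{i+1}$ and $l_{j-1}$, the $1$-jump cycle centred at $l_{i+1}$ or $l_{j-1}$ must be used, because a cycle centred at $l_i$ or $l_j$ would not lie inside $R$.

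\emph{Step 2: the rim edges.} Now $l_{i+1},\dots,l_{j-1}$ all have degree $4$ and $k\ge 2$, so $l_{i+1}l_{i+2}$ is an edge between two degree-$4$ neighbours of $v$. I claim some such rim edge $l_ml_{m+1}$ in $R$ is in $E_4$, which places it in $F_{v,3}$. Suppose not. By Lemma~\ref{lem:2.1}, $l_ml_{m+1}$ lies in a separating $4$-cycle. Since both ends have degree $4$, their neighbourhoods are known explicitly: the $y$'s from Step 1, $v$, and consecutive link vertices. The only candidate $4$-cycles through $l_ml_{m+1}$ are therefore $vl_{m-1}\cdots$, cycles through $y_m$, or, when $k=2$, cycles through $x$. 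I would check each candidate and show that a separating one either is a jump separating cycle on $v$ strictly inside $R$ with jump $\ge 2$, contradicting minimality, or makes $G$ contain a separating $3$-cycle or have at most $7$ vertices, contradicting $4$-connectivity and $|G|\ge 8$.

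I expect the main obstacle to be Step 2, in particular the case $k=2$. There $R$ is small: the vertices $l_{i+1}$ and $l_{i+2}$ together with $x$ and possibly one common neighbour. The cycle through $l_{i+1}l_{i+2}$ might be $l_il_{i+1}l_{i+2}x$-like, and one must argue that $x$ then has too small a degree or the configuration collapses into ${\rm DW}_4$ or ${\rm DW}_5$. I would handle $k=2$ by a direct local analysis of the at most two vertices strictly inside $vl_ixl_j$ other than $l_{i+1},l_{i+2}$. For $k\ge 3$ I would use the degree-$4$ structure from Step 1 to find a rim edge whose separating cycle would yield a jump $\ge 2$ cycle strictly inside $R$.
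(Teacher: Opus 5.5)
Your skeleton is sound: an inclusion-minimal $C=vl_ixl_jv$, all spokes $vl_m$ ($i<m<j$) non-contractible, and Lemma~\ref{lem:2.1} plus minimality forcing $1$-jump cycles. But the step that carries the argument is asserted rather than proved, and as stated it does not follow.

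First, the claim that a separating $1$-jump cycle has only its centre in its interior is false in general. Here it holds only because the centre spoke is itself non-contractible. Let $D=vl_{c-1}yl_{c+1}v$ lie in $R$ with $i<c<j$, and let $vl_cz l_tv$ be a separating $4$-cycle. If $z$ were strictly inside $D$, then $l_t\in\{l_{c-1},l_{c+1}\}$ and the cycle would bound two faces. The choice $z\in\{l_{c-1},l_{c+1}\}$ fails the same way. So $z=y$, hence $yl_c\in E(G)$, the four triangles inside $D$ are faces, and $d_G(l_c)=4$. (Your $D_m$ should read $vl_my_ml_{m\pm2}v$; as written both alternatives are the same cycle, and it does not contain $vl_m$.)

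Second, even granting this, running over $m$ only shows that the \emph{centres} have degree $4$. For $k\ge 3$ the centres need not exhaust $l_{i+1},\dots,l_{j-1}$: for $k=3$ you only get $l_{i+2}$ and one of $l_{i+1},l_{i+3}$. The paper never claims this. Its Subclaim shows the minimal cycle has $k=2$: the $1$-jump cycles through $vl_{i+1}$ and $vl_{i+2}$ share their apex $y'$, which is then adjacent to $l_{i+1},l_{i+2},l_{i+3}$ and to $l_i$ or $l_{i+4}$. That produces a $2$-jump cycle strictly inside. For $k=2$ the apex is adjacent to both inner vertices, so the configuration is ${\rm DW}^{-}_4$ and $l_{i+1}l_{i+2}\in F_{v,3}$.

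Step~2 is left as a plan, and the difficulty you anticipate at $k=2$ does not exist. In fact, what Step~1 legitimately yields already suffices. $D_{i+1}$ must be centred at $l_{i+2}$, and $D_{i+2}$ at $l_{i+1}$ or $l_{i+3}$ (when $k=2$, centre $l_{i+3}=l_j$ is excluded). So two consecutive inner vertices $l_m,l_{m+1}$ have degree $4$.

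Let $l_ml_{m+1}z$ be the face other than $vl_ml_{m+1}$. Then $z\notin N_G[v]$, and since both ends have degree $4$, $z$ is adjacent to $l_{m-1},l_m,l_{m+1},l_{m+2}$. A $4$-cycle $a\,l_m\,l_{m+1}\,b\,a$ has $a\in\{v,l_{m-1},z\}$ and $b\in\{v,l_{m+2},z\}$. Every such cycle either bounds two faces or needs the edge $vz$ or the chord $l_{m-1}l_{m+2}$ of the induced link ($p\ge 5$), and neither exists. By Lemma~\ref{lem:2.1}, $l_ml_{m+1}\in F_{v,3}$. No appeal to $|G|\ge 8$, ${\rm DW}_4$ or ${\rm DW}_5$ is needed.

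Repaired this way, your route is a valid alternative to the paper's. It replaces the reduction to $k'=2$ by the observation that two consecutive degree-$4$ neighbours of $v\in V_{\ge5}$ always span an edge of $F_{v,3}$.
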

\proof
If the inside region of $C$ contains an edge of $E_4$ incident to $v$, 
then we are done.
So, we suppose that there is no edge in $E_4$ incident to $v$.
In the inside region of $C$, take a $k'$-jump separating $4$-cycle $C'$ on $v$ so that
\begin{enumerate}[(i)]
\item $k' \ge 2$, and
\item subject to (i), the number of vertices in the inside region of $C'$ is as small as possible.
\end{enumerate}

\begin{subclaim}\label{cl:4_2_1}
$k'=2$.
\end{subclaim}
\proof
By way of contradiction, suppose $k' \ge 3$.
Suppose $C'=vl_iyl_j$ such that $j \ge i+4$ and $y \in V(G)  - N_G[v]$.
By the choice of $C'$, $yl_{i+1} \notin E(G)$.
Since $vl_{i+1} \notin E_4$, $vl_{i+1}$ is contained in a $1$-jump separating $4$-cycle $vl_{i+1}y'l_{i+3}v$, 
where $y'$ is a vertex of $V(G) - (N_G[v] \cup \{ y \})$ in the inside region of $C'$.
By the Jordan curve theorem on $vl_{i+1}y'l_{i+3}v$,  we have $yl_{i+2} \notin E(G)$. 
This together with $vl_{i+2} \notin E_4$ implies that
$vl_{i+2}$ is contained in either a $1$-jump separating $4$-cycle $vl_{i+2}y'l_{i+4}v$
or a $1$-jump separating $4$-cycle $vl_{i+2}y'l_{i}v$.
Then we can find either a $2$-jump separating $4$-cycle $vl_{i+1}y'l_{i+4}v$
or a $2$-jump separating $4$-cycle $vl_{i}y'l_{i+3}v$, contrary to the choice of $C'$.
\qed

By Subclaim~\ref{cl:4_2_1}, we may assume that $C'=vl_1yl_4v$, where  $y \in V(G) - N_G[v]$.
By the choice of $C'$, we have $l_iy \in E(G)$ for each $i \in \{ 2, 3 \}$ 
(otherwise we can find another $2$-jump separating $4$-cycle in the inside region of $C'$).
Thus $G[\{ v, y, l_1, l_2, l_3, l_4\}]$ is ${\rm DW}^{-}_4$, and hence $l_2l_3 \in F_{v,3}$. 
\qed

\begin{claim}\label{lem:4_3}
For $v \in V_{\ge 5}$, suppose that there exists a separating $4$-cycle containing $v$. 
Then $G$ has a separating $4$-cycle $C=vl_iyl_jv$ such that 
both the interior and the exterior of $C$ contain at least one edge $e \in F_{v,i}$ for some $i \in \{ 1, 2, 3\}$.
\end{claim}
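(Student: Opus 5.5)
Every separating $4$-cycle through $v$ has the form $C=vl_iyl_jv$ with $l_i,l_j$ non-consecutive on the link $L$ and $y\notin N_G[v]$. The reason is that $G[N_G(v)]$ is an induced cycle and $G$ has no separating triangles. Such a $C$ splits $L-\{l_i,l_j\}$ into the two arcs $l_i\overrightarrow{L}l_j$ and $l_j\overrightarrow{L}l_i$, one on each side of $C$, and both arcs are nonempty. So $C$ is a $k$-jump cycle with respect to one orientation and a $k''$-jump cycle with respect to the other, where $k+k''=p-4\ge 1$. The idea is to apply Claim~\ref{lem:4_2} to both sides whenever both jumps are at least $2$. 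The real work is to choose $C$ so that the short sides are also handled.

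Concretely, I will choose a separating $4$-cycle $C=vl_iyl_jv$ through $v$ that maximizes $\min\{|l_i\overrightarrow{L}l_j|,|l_j\overrightarrow{L}l_i|\}$, i.e.\ is as balanced as possible. If both jumps are at least $2$, then Claim~\ref{lem:4_2} applied to each side (the inside region with respect to each orientation) gives an edge of $F_{v,1}\cup F_{v,2}\cup F_{v,3}$ in the interior and one in the exterior, and we are done.

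The main obstacle is when one side is a $1$-jump, say $C=vl_1yl_3v$ with $l_2$ the only link vertex inside. Then $l_2$ has neighbours $v,l_1,l_3$, and possibly vertices inside $C$. I will first ask whether the other side has jump $p-5\ge 2$, which holds when $p\ge 7$. If so, Claim~\ref{lem:4_2} handles that side. For the $1$-jump side I will argue directly: if the interior of $C$ is exactly $\{l_2\}$, then $d(l_2)=4$ with link $vl_1yl_3$. In that case $vl_2$ is a candidate for $F_{v,2}$, and I will show it lies in no separating $4$-cycle. Such a cycle would be $vl_2zl_\ell$ with $z\in\{l_1,l_3,y\}$, hence either a triangle or a cycle through $y$. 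A cycle through $y$ would be a separating $4$-cycle $vl_2yl_\ell$ on $v$ that is more balanced than $C$ (or creates a separating triangle), which contradicts the maximality of $C$. Otherwise the interior contains more vertices, and I will pick a minimal separating $4$-cycle inside, in the manner of Subclaim~\ref{cl:4_2_1}, to locate an edge of $E_4$ at $v$ or an edge of $F_{v,3}$.

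The small cases $p=5,6$ remain, where both sides may be $1$-jumps; with $p=5$ a $2$-jump and a $1$-jump are forced. I will treat these by hand. Using the maximality choice, I will examine the edges $vl_t$ on each side and apply Lemma~\ref{lem:2.1}: an edge $vl_t$ not in $E_4$ lies in a separating $4$-cycle on $v$, and this cycle must cross or nest with $C$. Either it is more balanced than $C$, which is a contradiction, or it is confined to one side, so an adjacent edge $vl_{t\pm1}$ or an edge $l_tl_{t+1}$ between degree-$4$ link vertices is $4$-contractible. I expect this case analysis for the $1$-jump side to be the delicate part, and I will try the direct Lemma~\ref{lem:2.1} argument on $vl_2$ first.
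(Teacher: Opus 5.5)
Your overall architecture matches the paper's. A separating $4$-cycle with both jumps at least $2$ is handled by applying Claim~\ref{lem:4_2} to each side. Otherwise you take a $1$-jump cycle $C=vl_1yl_3v$, apply Claim~\ref{lem:4_2} to the long side, and treat the short side directly. That direct treatment, however, has a genuine gap.

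When the interior of $C$ is exactly $\{l_2\}$, you claim that $vl_2$ lies in no separating $4$-cycle, because a cycle $vl_2yl_\ell v$ would be more balanced than $C$. This fails for $\ell=4$, and symmetrically for $\ell=p$. If $yl_4\in E(G)$, then $vl_2yl_4v$ is separating, since it has $l_3$ on one side and $l_1$ on the other. Its short side contains only $l_3$, so it is again a $1$-jump cycle, exactly as balanced as $C$. Your maximality choice gives no contradiction, and indeed $vl_2\notin E_4$ by Lemma~\ref{lem:2.1}. This is not degenerate. It is the ${\rm DW}^{-}_4$ configuration ($y$ adjacent to $l_1,\dots,l_4$) that the paper later uses in Claim~\ref{lem:4_5}. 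Moreover, in the case at hand every separating $4$-cycle at $v$ has a $1$-jump side, so your choice rule cannot steer you away from it.

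The missing idea is to look for an edge of $F_{v,3}$ instead. Since $l_2l_3y$ and $l_3l_4y$ are triangles, they are faces, so $d_G(l_3)=4$ as well as $d_G(l_2)=4$. Every $4$-cycle through $l_2l_3$ has its other two vertices in $\{v,l_1,l_4,y\}$, so it has a chord and is not separating. Hence $l_2l_3\in E_4$, and $l_2l_3\in F_{v,3}$ lies on the $l_2$-side of $C$. This is exactly how the paper concludes. Either $vl_2\in E_4$, or $vl_2$ lies in $vl_2yl_4v$ (or $vl_2yl_pv$), and then $l_2l_3$ (or $l_1l_2$) is in $F_{v,3}$.

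Two further corrections simplify the rest.

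\emph{Jump count.} Your count is wrong: the two arcs of $L$ together have $p+2$ vertices, so $k+k''=p-2$, not $p-4$. Hence for every $p\ge 5$ the long side of a $1$-jump cycle has jump $p-3\ge 2$, and Claim~\ref{lem:4_2} always applies there. The ``small cases $p=5,6$ where both sides may be $1$-jumps'' do not exist, so your last paragraph is unnecessary.

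\emph{Larger interior.} The case where the interior of $C$ has more vertices than $l_2$ needs no minimal-cycle argument. In any separating $4$-cycle $vl_2zl_\ell v$, the vertex $l_\ell$ is not a link-neighbour of $l_2$, so it lies outside $C$. Then $z\in V(C)-N_G[v]=\{y\}$. But $yl_2\in E(G)$ would make $l_1l_2y$ and $l_2l_3y$ faces, forcing the interior of $C$ to be $\{l_2\}$. Thus $vl_2\in E_4$ directly.
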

\proof
For a separating $4$-cycle $C=vl_iyl_jv$, 
let $k^{C}_1$ and $k^{C}_2$ are integers such that $|l_i\overrightarrow{L}l_j|=k^{C}_1+2$ and $|l_j\overrightarrow{L}l_i|=k^{C}_2+2$.
If there exists $C$ such that $k^{C}_i \ge 2$ for each $i \in\{ 1, 2 \}$, 
then we can take required edges by Claim~\ref{lem:4_2}.

So, we assume that for each $C$, either $k^{C}_1 =1$ or $k^{C}_2 =1$.
By symmetry, we may assume that $vl_1 \notin E_4$ and it is contained in 
a $1$-jump separating $4$-cycle $C=vl_1yl_3v$, where  $y \in V(G) - N_G[v]$.
By Claim~\ref{lem:4_2}, the inside region of $C$ containing $l_3\overrightarrow{L}l_1$ 
contains an edge $e \in F_{v,i}$ for some $i \in \{ 1, 2, 3\}$.
So, if $vl_2 \in E_4$, then we are done.
Thus, we suppose $vl_2 \notin E_4$.
By symmetry, we may assume that $vl_2$ is contained in 
a $1$-jump separating $4$-cycle $vl_2yl_4v$.
Then we have $l_2l_3, e \in F_{v, 3}$, and hence $C$ is a desired separating $4$-cycle.
\qed

By Claim~\ref{lem:4_3}, we can see that for each $v \in V_{\ge 5}$, 
$w(v) \ge 2$.

\begin{claim}\label{lem:4_5}
For $v \in V_{\ge 5}$, 
suppose that $l_1, \ldots , l_{k}$ $(k \ge 4)$ be vertices of the link of $v$ such that  
$l_{i}l_{i+1} \in F_{v,3}$ for each $i \in \{ 2, \ldots, k-2 \}$ and 
$d_G(l_i) \ge 5$ for each $i \in \{ 1, k \}$.
Then there exists $y \in V_{\ge 5} - N_G[v]$ such that 
$G[\{ v, y, l_1, \ldots l_{k} \}]$ is isomorphic to ${\rm DW}^{-}_k$.
\end{claim}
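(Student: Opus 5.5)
We use the fact that each $l_i$ with $2\le i\le k-1$ has degree $4$, so its link is a $4$-cycle in which $v$, $l_{i-1}$ and $l_{i+1}$ appear, with $l_{i-1},l_{i+1}$ consecutive to $v$. The fourth vertex of that link is the vertex opposite $v$; call it $y_i$. Since $G$ is a triangulation and $d_G(l_i)=4$, the link of $l_i$ is exactly $v\,l_{i-1}\,y_i\,l_{i+1}\,v$. Thus $l_{i-1}y_il_i$ and $l_iy_il_{i+1}$ are faces.

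\textbf{Step 1: all the $y_i$ coincide.} For $2\le i\le k-2$, the edge $l_il_{i+1}$ lies on two faces: $vl_il_{i+1}$ and $l_iy_il_{i+1}$. Viewed from $l_{i+1}$, the same edge lies on the faces $vl_il_{i+1}$ and $l_iy_{i+1}l_{i+1}$. Hence $y_i=y_{i+1}$, and by induction there is one vertex $y$ with $y=y_i$ for all $2\le i\le k-1$.

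\textbf{Step 2: $y$ is a new vertex.} We have $y\neq v$, since otherwise $l_i$ would be adjacent to $v$ twice. Also $y\notin N_G(v)$: otherwise $v\,l_1\,y$ together with $l_2$ gives a separating triangle, or a multiple edge, contradicting $4$-connectedness. More precisely, if $y=l_j$, then $vl_2$ would lie in a separating triangle $v l_2 y$, unless the graph is tiny, which is excluded because $|G|\ge 8$ and $d_G(v)\ge 5$.

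\textbf{Step 3: the induced subgraph.} The edges are $vl_i$ for all $i$, the path $l_1l_2\cdots l_k$, and $yl_i$ for all $i$, including $yl_1$ and $yl_k$ from the faces at $l_2$ and $l_{k-1}$. This is ${\rm DW}_k$ on the cycle $l_1\cdots l_k$ with poles $v,y$, minus the edge $l_1l_k$, provided $l_1l_k\notin E(G)$ and there are no further chords. Chords $l_al_b$ with $2\le a\le k-1$ are impossible because those vertices have degree $4$ and we know all their neighbours. The edge $l_1l_k$ is absent: since $d_G(v)\ge 5$, the vertices $l_1,l_k$ are not consecutive on the link of $v$ if $k<d_G(v)$, and the link of $v$ is an induced cycle. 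If $k=d_G(v)$, then $l_1l_k$ is on the link; in that case the indexing convention is meant so that $l_1l_k$ is the missing pair, and $G$ would be a double wheel whose non-$\{v,y\}$ vertices have degree $4$. This contradicts $d_G(l_1)\ge 5$ unless extra vertices exist, which planarity forbids. So here I would argue $k<d_G(v)$ or handle this boundary case directly.

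\textbf{Step 4: $y\in V_{\ge 5}$.} Note that $d_G(y)\ge k\ge 4$. If $d_G(y)=4$ (so $k=4$), then $N_G(y)=\{l_1,\dots,l_4\}$, and $l_1l_4$ must be an edge to close the link of $y$. That contradicts the induced structure in the case $k<d_G(v)$. Alternatively, $\{v,l_1,l_4\}$ would separate $\{l_2,l_3,y\}$ from the remaining vertices (there are at least $8-6=2$ of them), which is a $3$-cut and contradicts $4$-connectedness. Hence $d_G(y)\ge 5$.

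\textbf{Main obstacle.} I expect the main difficulty to be the careful exclusion of degenerate coincidences, namely $y\in N_G[v]$ and the case $l_1l_k\in E(G)$. Both should be handled by planarity and the absence of separating triangles.
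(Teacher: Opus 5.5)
Your proposal is correct and follows the paper's proof essentially step for step: you take $y$ as the common fourth neighbour of $l_2,\dots,l_{k-1}$, exclude $y\in N_G[v]$ because $G$ has no separating triangles, show $d_G(y)\ge 5$, and exclude $l_1l_k\in E(G)$ by showing that $k=d_G(v)$ would force $G\cong{\rm DW}_k$. The one point to tighten is that boundary case: extra vertices are ruled out not by planarity but by $4$-connectedness, since the triangle $yl_1l_k$ cannot be separating and is therefore a face, which closes up the double wheel and gives $d_G(l_1)=4$, contradicting $d_G(l_1)\ge 5$ --- this is exactly how the paper argues it.
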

\proof
Let $y$ be the vertex of $N_G(l_2) - \{ v, l_1, l_3\}$.
Since $l_{i}l_{i+1} \in F_{v,3}$ for each $i \in \{ 2, \ldots, k-2 \}$,
$d_G(l_j)=4$ for each $j \in \{ 2, \ldots, k-1 \}$.
This implies that for each $i \in \{ 1, \ldots, k \}$, 
$l_j$ is adjacent to $y$.
Since $G$ is $4$-connected, $y \notin N_G[v]$.
If $k \ge 5$, then $d_G(y) \ge 5$.
If $k=4$, then we have $l_1l_4 \notin E(G)$ because $G$ is $4$-connected, 
and hence $d_G(y) \ge 5$.
In either case, we have $y \in V_{\ge 5}$.
Since $G$ is $4$-connected, $vy \notin E(G)$.

Thus, it suffices to show that $l_1l_k \notin E(G)$.
Suppose that $d_G(v) = k$.
Since $G$ is $4$-connected, $yl_1l_ky$ be a facial cycle of $G$.
Then $G$ is ${\rm DW}_k$, contrary to the fact $d_G(l_i) \ge 5$ for each $i \in \{ 1, k \}$.
Thus $d_G(v) \ge k+1$. Since $G$ is $4$-connected, this leads to $l_{1}l_{k} \notin E(G)$ as desired.
\qed

Suppose $v \in V_{\ge 5}$ with $w(v) = 2$.
By Claim~\ref{lem:4_3}, there are two edges $e_1, e_2 \in F_{v,1} \cup F_{v,2} \cup F_{v,3}$.
For each $i$, if $e_i \in F_{v,1}$ such that end vertices of $e_i$ other than $v$ are 
not adjacent, then $v$ is said to be {\sl type(a)}.  
If $e_i \in F_{v,1}$ and $e_{3-i} \in F_{v,3}$ such that end vertices of $e_i$ other than $v$ are 
not adjacent to end vertices of $e_{3-i}$, then $v$ is said to be {\sl type(b)}.  
For each $i$, if $e_i \in F_{v,3}$ such that end vertices of $e_i$ are 
distinct and not adjacent, then $v$ is said to be {\sl type(c)}.

\begin{claim}\label{lem:4_6}
For $v \in V_{\ge 5}$, 
if $w(v) = 2$, then $v$ is type(a), type(b) or type(c).
\end{claim}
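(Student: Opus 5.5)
Since $w(v)=w_1(v)+2w_2(v)+w_3(v)$, the condition $w(v)=2$ forces one of two situations. Either $w_2(v)=1$ and $w_1(v)=w_3(v)=0$, or $w_2(v)=0$ and $w_1(v)+w_3(v)=2$. We already know from Claim~\ref{lem:4_3} that there are at least two edges $e_1,e_2\in F_{v,1}\cup F_{v,2}\cup F_{v,3}$. So the first situation is impossible: it would give only one such edge.

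We may therefore assume $F_{v,2}=\emptyset$ and that $e_1,e_2$ are exactly the edges of $F_{v,1}\cup F_{v,3}$. Every other edge of the wheel $G[N_G[v]]$ incident to $v$ is then not in $E_4$. In particular, $vl$ is non-contractible for every $l\in N_G(v)\cap V_4$, and $vl$ is non-contractible for all $l\in V_{\ge5}$ except the ends of $e_1,e_2$. By Lemma~\ref{lem:2.1}, each such edge lies in a separating $4$-cycle through $v$.

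The plan is to exploit the separating-cycle structure, as in the proofs of Claims~\ref{lem:4_2} and~\ref{lem:4_3}. First, suppose $v$ lies on a separating $4$-cycle $C$ with both sides of jump at least $2$. By Claim~\ref{lem:4_2}, each inside region then contains exactly one of $e_1,e_2$. Since $C=vl_iyl_jv$ separates the two regions, the ends of $e_1$ and $e_2$ lie on different sides of the path $l_i v l_j$. I would like to argue that this already prevents their non-$v$ endpoints from being adjacent, except possibly through $l_i$ or $l_j$.

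One then argues by cases on the type of $e_1,e_2$.

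\emph{Both in $F_{v,1}$.} We want their non-$v$ ends $u_1,u_2$ to be nonadjacent. If $u_1u_2\in E(G)$, then $u_1,u_2$ are consecutive on the link $L$. Every other link vertex $l$ has $vl\notin E_4$. Running the minimality argument of Claim~\ref{lem:4_2}, taking a separating $4$-cycle with jump at least $2$ and minimal inside, produces a ${\rm DW}^-_4$. This yields an $F_{v,3}$ edge not equal to $e_1,e_2$, or a second inside region containing a further edge, contradicting $w(v)=2$. When all separating cycles on $v$ are $1$-jump, the argument in Claim~\ref{lem:4_3} instead produces edges of $F_{v,3}$, which is again incompatible with both $e_i\in F_{v,1}$.

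\emph{One in $F_{v,1}$, one in $F_{v,3}$.} Let $e_1=vu$ and $e_2=l_sl_{s+1}$. Claim~\ref{lem:4_5} describes the neighbourhood of a maximal run of degree-$4$ link vertices: it is a ${\rm DW}^-_k$ whose ends have degree at least $5$ and whose apex is some $y$. If $u$ were adjacent to $l_s$ or $l_{s+1}$, then $u$ would be an end $l_1$ or $l_k$ of that run. We would then count the edge $vl_1$, together with the edges of the run, against $w(v)=2$. When $k\ge5$, the run contributes at least two $F_{v,3}$ edges, which is too many. When $k=4$, the ${\rm DW}^-_4$ on the other side, combined with Claim~\ref{lem:4_3}, should force an extra counted edge.

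\emph{Both in $F_{v,3}$.} If they share a vertex, or are joined by an edge, then they lie in a common run of degree-$4$ link vertices. Claim~\ref{lem:4_5} then shows the whole run sits in one ${\rm DW}^-_k$. Claim~\ref{lem:4_3} requires edges on both sides of a separating $4$-cycle $vl_1yl_kv$, so there must be another counted edge outside, contradicting $w(v)=2$.

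The main obstacle is making the "extra counted edge" step rigorous in each adjacency case. Concretely, one must show that whenever the ends of $e_1,e_2$ are adjacent or shared, there is still a separating $4$-cycle through $v$ whose two sides both contain counted edges and one of which avoids $\{e_1,e_2\}$. I would first try to apply Claim~\ref{lem:4_2} to the complementary region of the ${\rm DW}^-$ cycle $vl_1yl_kv$, using $d_G(v)\ge k+1$ as obtained in the proof of Claim~\ref{lem:4_5} to guarantee that this region has jump at least $2$. The $1$-jump degenerate cases would then be handled separately.
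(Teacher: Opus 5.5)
\textbf{There is a gap.} The proposal does not yet prove the claim. Each case rests on an unproved step (``should force an extra counted edge''), and you flag this yourself as the unresolved obstacle. The right idea is already in your third paragraph, but you restrict it to cycles with both jumps at least $2$ and then drop it for a contradiction-based case analysis.

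\textbf{The missing idea.} Claim~\ref{lem:4_3} gives you the separating cycle directly, and the paper's proof uses exactly this.
Since $w(v)=2<5$, $v$ lies on some separating $4$-cycle; otherwise all $d_G(v)\ge5$ edges at $v$ would be in $E_4$. So Claim~\ref{lem:4_3} gives a separating $4$-cycle $C=vl_iyl_jv$, with no condition on the jumps, each of whose sides contains a counted edge. Because $w(v)=2$, these are exactly $e_1,e_2$, each of weight $1$, one on each side. Neither is $vl_i$ or $vl_j$, since edges of $C$ are not in $E_4$ by Lemma~\ref{lem:2.1}.
\begin{itemize}
\item If $e_1=vu_1$ and $e_2=vu_2$ lie in $F_{v,1}$, then $u_1,u_2$ are strictly on opposite sides of $C$. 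The Jordan curve theorem gives $u_1u_2\notin E(G)$, which is type(a). You do not need to rerun the minimality argument of Claim~\ref{lem:4_2} or treat the all-$1$-jump situation separately.
\item Suppose some $e_i\in F_{v,3}$ comes from a run of exactly two degree-$4$ link vertices, say $e_i=l_2l_3$ with $l_1,l_4\in V_{\ge5}$. Claim~\ref{lem:4_5} gives a ${\rm DW}^-_4$ on $\{v,y,l_1,\dots,l_4\}$ in which $vl_1yl_4v$ is separating, so $vl_1,vl_4\notin E_4$. The only link vertices outside $\{l_2,l_3\}$ adjacent to $l_2$ or $l_3$ are $l_1$ and $l_4$. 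This disposes of the adjacency ``through $l_i$ or $l_j$'' that worried you and yields type(b) or type(c).
\end{itemize}
In particular, your $k=4$ subcase of the mixed case needs no extra counted edge: $u=l_1$ is impossible simply because $vl_1\notin E_4$.

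\textbf{Two steps that are wrong as written.}
First, in the case of two $F_{v,3}$ edges you read Claim~\ref{lem:4_3} as saying that the particular cycle $vl_1yl_kv$ has counted edges on both sides. The claim is only existential.

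Second, the configuration that genuinely needs work is two edges $l_2l_3$, $l_3l_4$ of one run with $l_1,l_5\in V_{\ge5}$. A longer run, or two $F_{v,3}$ edges joined by an edge, already gives a third $F_{v,3}$ edge, because an edge joining two vertices of $V_4$ lies in $E_4$ when $G\not\cong{\rm DW}_4$.
\begin{itemize}
\item Here $vl_1yl_3v$ is a separating $4$-cycle with one of the two edges on each side, so separation alone does not exclude this configuration. Also, Claim~\ref{lem:4_5} produces ${\rm DW}^-_5$ rather than ${\rm DW}^-_4$.
\item The paper's one-line appeal to Claim~\ref{lem:4_5} tacitly assumes runs of length two. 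In the minimal-counterexample setting this also follows from Claim~\ref{cl2:2}, whose proof does not use the present claim.
\item Your fallback of counting on the complementary side of $vl_1yl_5v$ is the right move. However, $d_G(v)\ge k+1$ only makes that side a $(d_G(v)-5)$-jump, which is at least $1$, not $2$.
\item For $d_G(v)\ge7$, Claim~\ref{lem:4_2} then supplies a third counted edge on that side.
\item For $d_G(v)=6$ you need a direct argument. A separating $4$-cycle through $vl_6$ would have to be $vl_6yl_jv$ with $j\in\{2,3,4\}$. That forces $yl_6\in E(G)$ and hence $G\cong{\rm DW}_6$, contrary to $d_G(l_1)\ge5$. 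So $vl_6\in E_4$ and $w(v)\ge3$.
\end{itemize}
This last subcase is precisely the one you left open.
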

\proof
Since $w(v)=2$, it follows from Claim~\ref{lem:4_3} that $G$ has a separating $4$-cycle $C=vl_iyl_jv$ such that 
$|i-j| \ge 2, \{ i, j\} \neq \{ 1, p \}$ and both inside regions on $C$ contain at least one edge $F_{v,1} \cup F_{v,2} \cup F_{v,3}$ 
, say $e_1$ and $e_2$.
Since $w(v)=2$, $e_i \in F_{v,1} \cup F_{v,3}$ for each $i$.
Since $E(C) \cap E_4=\emptyset$, if $e_i \in F_{v,1}$ for each $i$, 
then $v$ is type(a). 

If $e_i \in F_{v,3}$ for some $i$, 
then $e_i$ is contained in $H$ isomorphic to ${\rm DW}^{-}_4$
by Claim~\ref{lem:4_5}. 
Since $e_{3-i} \in F_{v,1} \cup F_{v,3}$, $e_{3-i} \notin E(H)$, 
and hence $v$ is either type(b) or type(c).
\qed

\begin{claim}\label{lem:K_3-free}
Suppose $v_1, v_2, v_3 \in V_4$.
Then $G[\{ v_1, v_2, v_3 \}]$ is not isomorphic to $K_3$.
\end{claim}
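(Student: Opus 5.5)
The claim does not need the minimality of $G$. I plan to show directly that a triangle of degree-$4$ vertices forces $G$ to be the octahedron ${\rm DW}_4$, which has order $6$. That contradicts $|G| \ge 8$.

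Suppose $v_1,v_2,v_3 \in V_4$ are pairwise adjacent. The cycle $v_1v_2v_3v_1$ cannot be separating, since $G$ is $4$-connected. In a plane triangulation a non-separating $3$-cycle bounds a face, so $v_1v_2v_3$ is a face. For each edge $v_jv_k$ let $v_jv_kx_i$ be the other face containing it, where $\{i,j,k\}=\{1,2,3\}$. Then:
\begin{enumerate}[(i)]
\item $x_i \notin \{v_1,v_2,v_3\}$, because $G$ has no multiple edges and $v_1v_2v_3$ is already a face on that side.
\item The $x_i$ are pairwise distinct. For instance, if $x_1=x_3$, then $v_2$ would have only the neighbours $v_1,v_3,x_1$, contradicting $d_G(v_2)=4$.
\end{enumerate}
Hence $N_G(v_1)=\{v_2,v_3,x_2,x_3\}$, and similarly for $v_2$ and $v_3$.

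Next I would use the link structure. As noted before Claim~\ref{lem:4_5}, $G[N_G(v)]$ is an induced cycle for every $v$. The faces around $v_1$ occur in the cyclic order $v_1v_2v_3$, $v_1v_3x_2$, then some face $v_1x_2z$, then $v_1x_3v_2$. Since $d_G(v_1)=4$, the remaining face must be $v_1x_2x_3$, so the link of $v_1$ is $v_2v_3x_2x_3v_2$ and $x_2x_3\in E(G)$. By symmetry $x_1x_2, x_1x_3\in E(G)$ as well. The six vertices $v_1,v_2,v_3,x_1,x_2,x_3$ therefore induce an octahedron ${\rm DW}_4$. In it, the faces around each $v_i$ use up one side of the triangle $T=x_1x_2x_3$, and that side contains no other vertices.

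To finish, note that $|G|\ge 8$, so there is a vertex outside $\{v_1,v_2,v_3,x_1,x_2,x_3\}$. It lies on the other side of $T$ (relative to the $v_i$), so $T$ is a separating $3$-cycle of $G$, contradicting $4$-connectedness. No step here is a real obstacle. The only point needing care is the local face bookkeeping that gives distinctness of the $x_i$ and the edges $x_ix_j$, and that follows from the link of a degree-$4$ vertex being an induced $4$-cycle.
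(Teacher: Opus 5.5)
Your proof is correct and follows essentially the same route as the paper. Both arguments use the faces around the three degree-$4$ vertices to show that they and their three other neighbours induce ${\rm DW}_4$, and then observe that any further vertex would make the triangle $x_1x_2x_3$ a separating $3$-cycle, contradicting $4$-connectedness. The only difference is that you get distinctness of the $x_i$ from a degree count where the paper appeals to $4$-connectedness, and this is immaterial.
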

\proof
By way of contradiction, $G[\{v_1, v_2, v_3\}]$ is isomorphic to $K_3$.
Since $v_1 \in V_4$, $N_G(v_1)$ induces a $4$-cycle $v_2v'_1v''_1v_3v_2$, 
where $v'_1, v''_1 \in N_G(v_1) - \{ v_2, v_3 \}$.
Since $v_2 \in V_4$, $N_G(v_2)$ induces a $4$-cycle $v_1v'_1v'_2v_3v_1$, 
where $v'_2 \in N_G(v_2) - \{ v_1, v_3 \}$.
Note that $v''_1 \neq v'_2$ because $G$ is $4$-connected.
Since $v_3 \in V_4$, $N_G(v_3)$ induces a $4$-cycle $v_1v_2v'_2v''_1v_1$, 
If $G$ has a vertex $V(G) - \{ v_1, v'_1, v''_1, v_2, v'_2, v_3 \} \neq \emptyset$, then $v'_1v''_1v'_2v'_1$ is a separating $3$-cycle, contrary to the fact $G$ is $4$-connected.
Thus  $V(G) = \{ v_1, v'_1, v''_1, v_2, v'_2, v_3 \} $, and hence $G$ is isomorphic to ${\rm DW}_4$, a contradiction.
\qed

\begin{claim}\label{cl2:1}
$V_4 \neq \emptyset$.
\end{claim}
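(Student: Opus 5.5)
We argue by contradiction: suppose $V_4=\emptyset$, so every vertex of $G$ lies in $V_{\ge 5}$. Then $F_{v,2}=F_{v,3}=\emptyset$ for every $v$. Hence $w(v)=w_1(v)$ is just the number of $4$-contractible edges at $v$, and
\[
\sum_{v\in V_{\ge 5}} w(v) = 2|E_4| .
\]
By the choice of $G$ we have $|E_4|\le |V_{\ge 5}|+2 = |G|+2$. So it suffices to show that $|E_4|\ge |G|+3$, and more precisely that many vertices have $w(v)\ge 3$.

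The main tool is Claim~\ref{lem:4_3}, together with Claim~\ref{lem:4_2}. Fix $v$ with link $L=l_1\cdots l_p$, $p\ge 5$.
\begin{enumerate}[(1)]
\item Since no $F_{v,3}$ edges exist, Claim~\ref{lem:4_2} shows that every $k$-jump separating $4$-cycle on $v$ with $k\ge 2$ has an edge of $E_4$ at $v$ strictly inside its inside region.
\item A $1$-jump separating $4$-cycle $vl_{i-1}yl_{i+1}v$ forces $l_i$ to have degree $4$ (its neighbours would be $v,l_{i-1},y,l_{i+1}$), which contradicts $V_4=\emptyset$. So there are no $1$-jump cycles at all.
\item Consequently, every edge $vl_i\notin E_4$ lies on a $k$-jump separating $4$-cycle with $k\ge 2$ on both sides. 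Each side then contains a contractible edge at $v$.
\end{enumerate}

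Next I would show $w(v)\ge 3$ whenever $v$ has a non-contractible edge. Take $vl_i\notin E_4$ on a separating cycle $C=vl_iyl_jv$. Both sides contain contractible edges, say $va$ and $vb$. If these were the only contractible edges at $v$, then $vl_i$, $vl_j$ and all the other spokes would be non-contractible. Now choose a separating cycle through $vl_j$, or through a spoke on one side, and apply Claim~\ref{lem:4_2} inside a minimal region, in the spirit of Subclaim~\ref{cl:4_2_1}. This should produce a third contractible spoke, since a minimal $k$-jump region with $k\ge 2$ and no degree-$4$ vertices must itself contain a contractible spoke different from the ones already found. If instead $v$ has no non-contractible edge, then $w(v)=d(v)\ge 5$. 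Either way $w(v)\ge 3$ for all $v$, so $2|E_4|\ge 3|G|$, i.e.\ $|E_4|\ge \frac32|G|>|G|+2$ once $|G|\ge 8$, a contradiction.

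The hard step is the third-edge argument. Two separating $4$-cycles through $v$ may cross, and the contractible edges guaranteed by Claim~\ref{lem:4_2} might coincide. To handle this, I would take a separating $4$-cycle through $v$ whose inside region is minimal and use planarity (a Jordan-curve argument as in Subclaim~\ref{cl:4_2_1}). The goal is to show that the minimal region contains a spoke $vl_m$ lying on no separating $4$-cycle, and to do this on each of the two sides together with one more disjoint region.

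If this proves too delicate, I would fall back on a global count. Euler's formula gives $\sum_v (d(v)-6)=-12$ with all $d(v)\ge 5$, so there are at least $12$ vertices of degree $5$. Combining this with the bound $w(v)\ge 2$ from Claim~\ref{lem:4_3}, it would then suffice to show that each degree-$5$ vertex has $w(v)\ge 3$. That case is easier to check directly, because a degree-$5$ link admits only $2$-jump cycles, which are tightly constrained. This already gives $2|E_4|\ge 2|G|+12$, i.e.\ $|E_4|\ge|G|+6$, a contradiction.
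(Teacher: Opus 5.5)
Your proposal has a genuine gap: the local per-vertex statements it relies on are false when $V_4=\emptyset$.

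\textbf{Step (2) fails.} A $1$-jump separating $4$-cycle $vl_{i-1}yl_{i+1}v$ only places $l_i$ on one side, adjacent to $v,l_{i-1},l_{i+1}$. Nothing forces $l_iy\in E(G)$, and the $4$-cycle $l_{i-1}l_il_{i+1}y$ may itself be separating, so $d_G(l_i)\ge 5$ is possible. Note also that at a degree-$5$ vertex every jump cycle $vl_iyl_{i+2}v$ is a $1$-jump cycle on the side of $l_{i+1}$. So (2)--(3) would make every spoke at a degree-$5$ vertex contractible, which clashes with your fallback remark about $2$-jump cycles at such vertices.

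\textbf{The bound $w(v)\ge 3$ is false, even for $d_G(v)=5$.} Both your main line and your fallback need this bound. For a counterexample, take $v$ with link $l_1l_2l_3l_4l_5$, and a vertex $y$ adjacent to $l_1,l_3,l_5$ with $yl_5l_1$ a face. Fill each of the quadrilaterals $l_1l_2l_3y$ and $l_3l_4l_5y$ with a copy of the icosahedron minus one edge. This is a disk bounded by a chordless $4$-cycle; its interior vertices all have degree $5$, and its boundary vertices have $3,2,3,2$ interior neighbours in cyclic order. Match $l_2$ and $l_4$ to boundary vertices with two interior neighbours. The result is a $4$-connected plane triangulation on $23$ vertices with minimum degree $5$. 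In it:
\begin{itemize}
\item $vl_1yl_3v$ is a $1$-jump separating $4$-cycle although $d_G(l_2)=5$;
\item $vl_1,vl_3,vl_5$ lie on the separating $4$-cycles $vl_1yl_3v$ and $vl_3yl_5v$;
\item $vl_2,vl_4\in E_4$.
\end{itemize}
Hence $d_G(v)=5$ and $w(v)=2$. So the ``third-edge argument'' cannot be completed by any reasoning confined to the wheel around one vertex. The twelve degree-$5$ vertices supplied by Euler's formula do not rescue the count either.

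\textbf{What is missing is a global extremal choice.} You only need a total excess of $6$ over $2|V_{\ge 5}|$, not an excess at every vertex. The paper proceeds as follows.
\begin{itemize}
\item If $G$ has no separating $4$-cycle, then $E_4=E(G)$ by Lemma~\ref{lem:2.1}, so $|E_4|=3|G|-6>|G|+2$.
\item Otherwise, take a separating $4$-cycle $C$ with as few interior vertices as possible. Minimality gives an interior vertex $v$ on no separating $4$-cycle, so every spoke at $v$ lies in $E_4$ and $w(v)=d_G(v)\ge 5$.
\item The same argument on the exterior gives a second vertex $u$ with $w(u)\ge 5$.
\end{itemize}
Combined with $w\ge 2$ everywhere from Claim~\ref{lem:4_3}, this yields $\sum_{v\in V_{\ge 5}}w(v)\ge 2|V_{\ge 5}|+6$, contradicting the choice of $G$.
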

\proof
By way of contradiction, suppose that $V_4 = \emptyset$.
Then we have $V_{\ge 5}=V(G)$.
If $G$ contains no separating $4$-cycle, then we have $|E(G)|=|E_4|=3|G|-6$ 
by Lemma~\ref{lem:2.1}.
Since $|G| \ge 8$, we have $3|G|-6 > |V_{\ge 5}|+2$, a contradiction.
Thus $G$ contains a separating $4$-cycle $C$.
We choose such a cycle so that the number of vertices contained in the interior of $C$ is as small as possible.
Since $C$ is a separating $4$-cycle, there is a vertex $v \in V_{\ge 5}$ in the interior of $C$.
By the minimality of $C$, all edges incident to $v$ are edges in $E_4$ 
because $v$ is contained in no separating $4$-cycle.
Thus, we have $w(v) \ge 5$. 
By symmetry of the interior and the exterior, we can find a vertex $u \in V_{\ge 5} - \{ v \}$ in the exterior of $C$ such that $w(u) \ge 5$.
This together with Claim~\ref{lem:4_3} implies that $\sum_{v \in V_{\ge 5}}w(v) \ge 2|V_{\ge 5}|+6$, 
contrary to the choice of $G$.
\qed

\begin{claim}\label{cl2:2}
Each component of $G[V_4]$ is isomorphic to a path of order at most two.
\end{claim}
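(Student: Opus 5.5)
The plan is to show that no vertex of $V_4$ has two neighbours in $V_4$. Then every component of $G[V_4]$ has maximum degree at most one, so it is $K_1$ or $K_2$, which is the claim.

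\emph{Setting up the configuration.} Suppose $u\in V_4$ has two neighbours $a,b\in V_4$. The link of $u$ is an induced $4$-cycle. If $a$ and $b$ were consecutive on it, then $uab$ would be a triangle of vertices of $V_4$, contrary to Claim~\ref{lem:K_3-free}. So the link of $u$ is $axbx'a$. Since $d_G(a)=d_G(b)=4$, the link of $a$ is $ux a' x'u$ and the link of $b$ is $uxb'x'u$ for suitable vertices $a',b'$.
\begin{enumerate}[(i)]
\item If $a'=b'$, then $x$ and $x'$ both have exactly the neighbours $a',a,u,b$, so $G\cong {\rm DW}_4$, which is impossible since $|G|\ge 8$. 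Hence $a'\ne b'$.
\item Consequently $x$ has the consecutive neighbours $a',a,u,b,b'$ around it, and likewise $x'$. So $x,x'\in V_{\ge 5}$, and $G[\{x,x',a',a,u,b,b'\}]$ looks like ${\rm DW}^-_5$ with poles $x,x'$.
\item By Claim~\ref{lem:K_3-free}, at most one of $a'$ and $a$ lies in $V_4$. By Lemma~\ref{lem:2.1}, the edges $ua$ and $ub$ lie in no separating $4$-cycle; the only candidate cycles through them use a chord. So $ua,ub\in F_{x,3}\cap F_{x',3}$.
\item On the other hand, $xu$, $xa$, $xb$ (and their counterparts at $x'$) lie in separating $4$-cycles such as $xux'a'x$, so these edges are not in $E_4$.
\end{enumerate}

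\emph{Deriving a contradiction by counting.} I will compare with Claim~\ref{lem:4_1}. That claim together with the choice of $G$ gives $\sum_{v\in V_{\ge5}}w(v)\le 2|V_{\ge5}|+4$, while Claim~\ref{lem:4_3} gives $w(v)\ge 2$ for every $v$. So it suffices to find enough excess weight at $x$ and $x'$, plus possibly elsewhere, to exceed $+4$.
\begin{enumerate}[(i)]
\item The interior of the separating $4$-cycle $xa'x'b'x$ consists only of $a,u,b$.
\item The exterior is nonempty since $|G|\ge 8$. Taking a minimal separating $4$-cycle on the far side, as in the proof of Claim~\ref{cl2:1}, should produce further vertices with large $w$.
\item At $x$, the edges $ua,ub\in F_{x,3}$ give weight $2$ on the side containing $a,u,b$. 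Claim~\ref{lem:4_3}, applied on the opposite side of a separating $4$-cycle through $x$, gives at least one more edge. So I aim for $w(x)\ge 3$ and $w(x')\ge 3$.
\end{enumerate}
To recover the remaining surplus, I would use the minimality of $G$ (condition (3)). Contract the edge $ua$, which lies in $E_4$, to obtain a smaller $4$-connected triangulation $G'$. Then track how $|E_4|$ and $|V_{\ge 5}|$ change:
\begin{itemize}
\item the merged vertex has degree $4$;
\item $x$ and $x'$ each drop one degree;
\item $F_3$-edges such as $ua$ disappear.
\end{itemize}
If $|E_4(G')|-|V_{\ge5}(G')|$ is at most the corresponding quantity for $G$ minus a controlled amount, then either $G'$ contradicts the minimality of $G$, or $G'\in\mathcal G$. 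In the latter case, reversing the contraction should be an extension~1 or extension~2, so $G\in\mathcal G$, again a contradiction.

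\emph{Main obstacle.} The hard step is making this last bookkeeping exact. I need to determine precisely which edges near $x,x',a',b'$ gain or lose $4$-contractibility under the contraction. I also need to check that $G'$ stays $4$-connected with order at least $7$, and handle the subcases $d_G(x)=5$ versus $d_G(x)\ge 6$ and $a'\in V_4$ versus $a'\in V_{\ge5}$. If the contraction route becomes unwieldy, my fallback is a purely local count. That count would show that the ${\rm DW}^-_5$-type configuration forces $w(x)+w(x')\ge 8$, using Claim~\ref{lem:4_5} on the path $a',a,u,b,b'$ when $a',b'\in V_{\ge5}$. Combined with one extra vertex of weight at least $5$ found inside a minimal separating $4$-cycle on the far side, this would exceed $2|V_{\ge5}|+4$.
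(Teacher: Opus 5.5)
Your reduction to ``no vertex of $V_4$ has two neighbours in $V_4$'' is sound, and so is the local analysis: the links of $u,a,b$, $a'\ne b'$, $ua,ub\in E_4$, and $xu,xa,xb\notin E_4$. Contracting $ua$ is also exactly the paper's route. However, the proposal stops at the point where the proof actually has to be done, so as it stands there is a genuine gap.

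The weight count cannot close it by itself.
\begin{itemize}
\item It gives only $w(x),w(x')\ge 3$, a surplus of $2$, where $5$ is needed.
\item The fallback $w(x)+w(x')\ge 8$ is unsupported. When $a',b'\in V_{\ge5}$, the edges $xa'$ and $xb'$ lie in the separating cycle $xa'x'b'x$, so that side contributes only $au$ and $ub$ to $w(x)$.
\item The minimal-separating-cycle trick of Claim~\ref{cl2:1} does not transfer, because it uses $V_4=\emptyset$. Here a minimal separating $4$-cycle may simply be the link of a degree-$4$ vertex.
\end{itemize}
Separately, the first sentence of your (iii) is false. The common neighbours of $a$ and $a'$ are $x,x'\in V_{\ge5}$, so Claim~\ref{lem:K_3-free} does not exclude $a'\in V_4$, and this does occur.

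What the contraction step needs, and what you leave open, is the following.

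\emph{Degrees of $x,x'$.} You need $d_G(x),d_G(x')\ge 6$, so your subcase $d_G(x)=5$ is void. If $d_G(x)=5$, the link of $x$ is $a'aubb'a'$, so $a'b'\in E(G)$. The link of $x'$ also contains the path $a'aubb'$. So either $d_G(x')=5$ and $G\cong{\rm DW}_5$ has order $7$, or $a'b'$ is a chord of the link of $x'$; both are impossible. Now let $G'$ be obtained by contracting $ua$ to a vertex $w$. Then $w$ has degree $4$ (neighbours $a',x,b,x'$), $x$ and $x'$ keep degree at least $5$, and $|V_{\ge5}(G')|=|V_{\ge5}|$.

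\emph{Contractible edges.} You need $|E_4(G')|\le|E_4|-1$.
\begin{itemize}
\item Every separating $4$-cycle of $G$ avoiding $u,a$ stays separating in $G'$.
\item Of those through $u$ or $a$, only $uxa'x'u$ and $axbx'a$ stop separating.
\item Their surviving edges $xa',x'a',xb,x'b$ remain covered by $xa'x'b'x$ and $xbx'a'x$, which avoid $u,a$.
\item The merged edges $wx,wx'$ lie in the separating cycle $wxb'x'w$.
\end{itemize}

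\emph{Applying minimality.} Since $ua\in E_4$ and $|G|\ge 8$, $G'$ is a $4$-connected triangulation of order at least $7$. Minimality then gives $|E_4(G')|\ge|V_{\ge5}(G')|+2$ whether or not $G'\in\mathcal G$, hence $|E_4|\ge|V_{\ge5}|+3$. No ``reversal is an extension'' argument is needed. That argument would be wrong anyway: extensions 1 and 2 preserve $|E_4|-|V_{\ge5}|$, while undoing this contraction increases it.

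Completed this way, your argument also covers the paper's separate treatment of cycle components (${\rm DW}_n$).
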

\proof
If $G[V_4]$ has a vertex $v$ of degree at least $3$, then $G[N_{G[V_4]}[v]]$ contains a triangle since $G$ is a triangulation and $v \in V_4$, which contradicts Claim~\ref{lem:K_3-free}. Thus, the maximum degree of $G[V_4]$ is at most two. In particular, each component of $G[V_4]$ is isomorphic to either a path or a cycle.

\begin{subclaim}\label{cl2:2_2}
Each component of $G[V_4]$ is isomorphic to a path.
\end{subclaim}
\proof
By way of contradiction, suppose that $G[V_4]$ contains a cycle $C=c_{1}c_{2} \ldots c_{l}c_1$, where $l \ge 4$.
Since $C$ has no chord, each vertex on $C$ is adjacent to two vertices in $V(G)-V(C)$ 
such that one is contained in the interior of $C$ and the other is contained in the exterior of $C$.
Let $v$ and $u$ be vertices of $V(G)-V(C)$ which are adjacent to $c_{1}$.
Since $d_G(c_{1})=4$, we have $c_{2}v, c_{2}u \in E(G)$.
By the same argument, we have $c_{i}v, c_{i}u \in E(G)$ for each $i \in \{ 1, \ldots, l \}$.
Consequently, $G$ is isomorphic to ${\rm DW}_n$ for some $n \ge 5$. 
So, we have $|E_4| \ge |G|-2 > |V_{\ge 5}|+2$, contrary to the choice of $G$.
\qed

Now we show that each component of $G[V_4]$ is isomorphic to a path of order at most two.
By way of contradiction, suppose that $G[V_4]$ contains a path $P=p_{1}p_{2} \ldots p_{l}$, where $l \ge 3$.
Suppose $\{ p^{-}, v, u \} = N_G(p_1)-\{ p_{2} \}$ such that $p^{-}v p_{2} u p^{-}$ is a separating $4$-cycle.
Since $d_G(p_i)=4$ and $P$ contains no chord $p_{i}p_{i+2}$ for each $i$, 
we have $p_{i}v, p_{i}u \in E(G)$ for each $i \in \{ 1, \ldots, l \}$.
This implies that $G[V(P) \cup \{ p^{-}, p^{+}, v, u \}]$ is isomorphic to 
${\rm DW}_{l+1}$, ${\rm DW}_{l+2}$ or ${\rm DW}^{-}_{l+2}$, where $p^{+} \in N_G(p_{l}) - \{ p_{l-1}, v, u\}$.
Since $d_G(p^{-}), d_G(p^{+}) \ge 5$, $G[V(P) \cup \{ p^{-}, p^{+}, v, u \}]$ is isomorphic to 
neither ${\rm DW}_{l+1}$ nor ${\rm DW}_{l+2}$, and hence it is isomorphic to ${\rm DW}^{-}_{l+2}$.
Thus we have $d_G(v), d_G(u) \ge 6$.
Let $G'$ be a plane triangulation obtained from $G$ by contracting $p_{1}p_{2}$, 
and let $E'_4$ and $V'_{\ge 5}$ denote the set of $4$-contractible edges and the set of vertices of degree at least $5$ in $G'$, respectively.
By the definition of $G'$, we have $|E'_4|=|E_4|-1$.
Since $d_G(v), d_G(u) \ge 6$, we have $d_{G'}(v), d_{G'}(u) \ge 5$.
So, we have $|V_{\ge 5}|=|V'_{\ge 5}|$.
By the choice of $G$, we have $|E'_4| \ge |V'_{\ge 5}|+2$.
Then $|E_4| \ge  |V_{\ge 5}|+3 >  |V_{\ge 5}|+2$, contrary to the choice of $G$.
\qed

By Claim~\ref{cl2:2}, we may assume that 
each component of $G[V_4]$ is isomorphic to either $P_1$ or $P_2$, 
where $P_n$ denotes a path of order $n$.

\begin{claim}\label{cl2:3}
Let $C$ be a separating $4$-cycle of $G$ such that $V(C) \subset V_{\ge 5}$.
Then $C$ and the interior of $C$ contains one of the following;
\begin{enumerate}[{\rm (1)}]
\item a vertex $v \in V_{\ge 5} -V(C)$ with $w(v) \ge 5$, or
\item two vertices $v_1$ and $v_2$ with $w(v_1), w(v_2) \ge 3$. 
Moreover, if $v_i$ is contained in $V(C)$, then an edge of $E_4$ counted by $v_i$ as weight $2$ is contained in the interior of $C$.
\end{enumerate}
\end{claim}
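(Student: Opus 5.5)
We argue by extremality, as in the proof of Claim~\ref{cl2:1}. Among the separating $4$-cycles $C'$ of $G$ with $V(C')\subset V_{\ge 5}$ that lie in the closed interior of $C$ (including $C$ itself), choose one, still called $C=c_1c_2c_3c_4c_1$, whose interior contains as few vertices as possible. Any vertex or edge found inside this minimal cycle also lies inside the original one, so it suffices to prove the claim for the minimal $C$. Write $I$ for the set of interior vertices; $I\neq\emptyset$ because $C$ is separating.

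\textbf{Case 1: $I$ contains a vertex $v\in V_{\ge 5}$ lying on no separating $4$-cycle.} By Lemma~\ref{lem:2.1}, every edge of the wheel at $v$ that is incident to $v$ is in $E_4$. Each such edge contributes $1$ or $2$ to $w(v)$, so $w(v)\ge d_G(v)\ge 5$. This gives alternative (1).

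\textbf{Case 2: every vertex of $I\cap V_{\ge 5}$ lies on a separating $4$-cycle.} Let $v\in I\cap V_{\ge 5}$ and let $D$ be a separating $4$-cycle through $v$. By the minimality of $C$ and the Jordan curve theorem, $D$ cannot be contained in the closed interior of $C$ with all four of its vertices in $V_{\ge 5}$. So either $D$ contains a vertex of $V_4$, or $D$ crosses $C$. In the crossing case we can shortcut along $C$ to obtain a smaller separating $4$-cycle, using the $4$-connectivity of $G$ (no separating triangles) and planarity.

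Now suppose $D=vl_iyl_jv$ with $l_i$ or $y$ in $V_4$. By Claim~\ref{cl2:2}, each component of $G[V_4]$ has order at most two. By Claims~\ref{lem:4_5} and~\ref{lem:K_3-free}, the local structure around such a vertex is a ${\rm DW}^{-}_4$ configuration, and this yields an edge of $F_{v,2}$ or $F_{v,3}$ in addition to the two edges guaranteed by Claim~\ref{lem:4_3}. Hence $w(v)\ge 3$. A second vertex $v'$ with $w(v')\ge 3$ is then obtained from the other side of the ${\rm DW}^{-}_4$ configuration (the vertex $y$ of Claim~\ref{lem:4_5}), or from the mirror argument, and this vertex may lie on $C$.

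\textbf{Degenerate case: $I\subset V_4$.} By Claim~\ref{cl2:2}, $|I|\le 2$. If $|I|=1$, the interior is a single degree-$4$ vertex $d$ adjacent to all of $C$. If $|I|=2$, the interior is an edge $d_1d_2$ as in the definition of extension~1. In both cases the edges $c_id$ and $d_1d_2$ are not on separating $4$-cycles, so they lie in $E_4$. These are edges of $F_{c_i,2}$ or $F_{c_i,3}$, each counted with weight $2$ by some $c_i$. A degree count around $C$, using $w(c_i)\ge 2$ from Claim~\ref{lem:4_3} together with the extra interior edges, then gives two vertices of $C$ with $w\ge 3$ whose extra weight-$2$ edge lies inside $C$. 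This is exactly alternative (2), including its final sentence.

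The main obstacle is Case~2. There we must show that a separating $4$-cycle through an interior vertex $v\in V_{\ge 5}$ either produces a smaller all-$V_{\ge 5}$ separating $4$-cycle inside $C$, contradicting minimality, or forces $V_4$-configurations that raise the weights of two distinct vertices. The difficulty is keeping these two vertices distinct and making sure the weight-$2$ edges they count lie in the interior of $C$. I would first settle the case where $D$ is a $1$-jump cycle, treating it as in the proof of Claim~\ref{lem:4_3}, and then reduce the $k$-jump cycles with $k\ge 2$ to it via Claim~\ref{lem:4_2}.
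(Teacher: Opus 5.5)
\textbf{Verdict: genuine gap.} Your overall frame is the paper's. You pass to an innermost separating $4$-cycle with all vertices in $V_{\ge 5}$, obtain (1) from an interior vertex of $V_{\ge 5}$ lying on no separating $4$-cycle, and obtain (2) when the interior consists of degree-$4$ vertices. However, your Case~2, which you yourself flag as the main obstacle, is not proved. Three assertions there are unsupported: that the ${\rm DW}^{-}_4$ configuration gives an edge \emph{in addition to} the two from Claim~\ref{lem:4_3}, that a second vertex $v'$ with $w(v')\ge 3$ exists, and that the ``moreover'' clause of (2) holds.

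The missing idea is that Case~2 is empty, and minimality shows this at once. Suppose $I$ contains some $p\in V_4$, and let $K$ be the component of $G[V_4]$ containing $p$. Then $K\subseteq I$, because $V(C)\subset V_{\ge 5}$, and $|K|\le 2$ by Claim~\ref{cl2:2}. So the link of $K$ is a separating $4$-cycle with all vertices in $V_{\ge 5}$, lying in the closed interior of $C$, and one of its sides is exactly $K$. (This link is $N_G(p)$ if $|K|=1$, or the outer $4$-cycle of the ${\rm DW}^{-}_4$ around $K$ if $|K|=2$.) Minimality forces $I=K$. Hence there are only two possibilities. Either $I\cap V_4=\emptyset$, and then your own minimality/shortcut argument shows no interior vertex lies on a separating $4$-cycle, which is Case~1. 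Or $I$ is a single component of $G[V_4]$, which is your degenerate case. This is exactly how the paper splits the proof.

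Your degenerate case also rests on a false step. When $I=\{d\}$, it is not true that all four edges $c_id$ avoid separating $4$-cycles. The vertex $d$ may lie on a separating $4$-cycle $dc_2yc_4d$ with $y$ outside $C$; this is precisely the situation handled in the proof of Claim~\ref{cl2:4}. Similarly, when $I=\{d_1,d_2\}$, the edges from $d_1,d_2$ to their two common neighbours need not lie in $E_4$. Also, $d_1d_2$ lies in $F_{c,3}$ and has weight $1$, not $2$.

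What is true is weaker but suffices:
\begin{itemize}
\item For $|I|=1$, one opposite pair, say $c_1d$ and $c_3d$, lies in $E_4$. If both $\{c_1,c_3\}$ and $\{c_2,c_4\}$ had a common neighbour outside $C$, planarity would force these neighbours to coincide. That vertex would be adjacent to all of $C$, giving $G\cong{\rm DW}_4$.
\item For $|I|=2$, the edges $c_1d_1$ and $c_3d_2$ to the private neighbours lie in $E_4$, because $c_1c_3\notin E(G)$. A chord $c_1c_3$ outside $C$ would make $c_1c_2c_3$ and $c_1c_3c_4$ faces, so $C$ would not separate.
\end{itemize}

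Once you have a genuine weight-$2$ edge $c_1d\in F_{c_1,2}$, you get $w(c_1)\ge 3$ as the paper does, via Claim~\ref{lem:4_6}, since types (a)--(c) use only $F_{v,1}\cup F_{v,3}$. Your route via Claim~\ref{lem:4_3} also works. Since $c_1d\in E_4$, it lies on no separating $4$-cycle, so it sits strictly on one side of the cycle supplied there. The other side then contributes at least $1$ more to $w(c_1)$.
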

\proof
In the interior of $C$, we choose a separating $4$-cycle $C'=c'_{1}c'_{2}c'_{3}c'_{4}c'_1$ with $V(C') \subset V_{\ge 5}$
so that the number of the vertices contained in the interior of $C'$ is as small as possible (we may have $C=C'$).
Suppose that the interior of $C'$ contains no vertices in $V_4$. 
Then it contains a vertex $v \in V_{\ge 5} -V(C)$. 
By the minimality of the interior of $C'$, each edge $e$ incident to $v$ 
is not contained in any separating $4$-cycle, and hence  $e \in E_4$.
This implies that (1) holds.

So, we consider the case when the interior of $C'$ contains a vertex $p_1 \in V_4$.
Suppose that $p_{1}$ is an induced $P_1$ in $G[V_4]$. 
Then $G[N_G[p_{1}]]$ is isomorphic to ${\rm DW}^{-}_3$ such that $N_G(p_{1}) \subset V_{\ge 5}$. 
This implies that $G[V(C') \cup \{ p_{1} \}]$ is
isomorphic to ${\rm DW}^{-}_3$, by the minimality of the interior of $C'$.
Since $|G| \ge 7$ (i.e., $G$ is not isomorphic to ${\rm DW}_4$), 
we can see that either $\{ c'_1p_{1}, c'_3p_{1} \} \subset E_4$ or 
$\{ c'_2p_{1}, c'_4p_{1} \} \subset E_4$.
By symmetry, we suppose that $\{ c'_1p_{1}, c'_3p_{1} \} \subset E_4$.
Then by Claim~\ref{lem:4_6}, (2) holds with $\{ c'_{1}, c'_{3} \} = \{ v_1, v_2 \}$.

Thus we may assume that there exist $p_{2} \in V_4$ such that $p_{1}p_{2} \in E(G)$.
Since $p_{1}, p_{2} \in V_4$, $G[N_G[\{p_{1}, p_{2}\}]]$ is isomorphic to ${\rm DW}^{-}_4$.
Note that $N_G(\{ p_{1}, p_{2} \}) \subset V_{\ge 5}$ by Claim~\ref{cl2:2}. This implies that $G[V(C') \cup \{ p_{1}, p_{2} \}]$ is
isomorphic to ${\rm DW}^{-}_4$, by the minimality of the interior of $C'$.
By symmetry, we may assume that $\{ c'_{2}, c'_{4} \} = N_G(p_{1}) \cap N_G(p_{2})$.
Then by Claim~\ref{lem:4_6}, (2) holds with $\{ c'_{1}, c'_{3} \} = \{ v_1, v_2 \}$.
\qed

\begin{claim}\label{cl2:4}
Each component of $G[V_4]$ is isomorphic to $P_2$.
\end{claim}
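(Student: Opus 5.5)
By Claim~\ref{cl2:2}, each component of $G[V_4]$ is $P_1$ or $P_2$. So it suffices to rule out an isolated vertex $p\in V_4$, meaning $N_G(p)\subset V_{\ge 5}$. I would argue by contradiction in the same spirit as the end of the proof of Claim~\ref{cl2:2}: first reduce $G$ to a smaller $4$-connected triangulation $G'$, then use the minimality of $G$.

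First I fix the local structure. The link of $p$ is an induced $4$-cycle $c_1c_2c_3c_4c_1$ with every $c_i\in V_{\ge 5}$. Because $G\neq {\rm DW}_4$, this cycle is separating. Exactly as in the proof of Claim~\ref{cl2:3}, one diagonal pair gives $4$-contractible edges. Without loss of generality $pc_1,pc_3\in E_4$, while $pc_2,pc_4$ lie in separating $4$-cycles $pc_2xc_4p$ (Lemma~\ref{lem:2.1}).

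The main route is to contract $pc_1$, giving $G'=G/pc_1$, which is $4$-connected since $pc_1\in E_4$. In $G'$ the vertex $c_1$ absorbs $p$. The degrees of $c_2$ and $c_4$ drop by one and the degree of $c_3$ rises by one. Next I compare $E_4$ with $E'_4$ and $V_{\ge5}$ with $V'_{\ge5}$. One unit of $|E_4|$ is lost because $pc_1$ disappears, and $pc_3$ becomes $c_1c_3$. Any $4$-contractible edge of $G$ not meeting $p$ that stays contractible in $G'$ is preserved. The risk is that a new separating $4$-cycle through $c_1c_3$, or a new degree-$4$ vertex, destroys extra edges.

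The key case analysis is on $d_G(c_2)$ and $d_G(c_4)$.
\begin{itemize}
\item If both are at least $6$, then $|V'_{\ge5}|=|V_{\ge5}|$ and $|G'|\ge 7$. If I can show $|E'_4|\ge |E_4|-1$, minimality gives $|E'_4|\ge|V'_{\ge5}|+2$, provided $G'\notin\mathcal{G}$ or the equality case is handled. This yields $|E_4|\ge|V_{\ge5}|+3$, a contradiction.
\item If, say, $d_G(c_2)=5$, then $c_2$ drops into $V'_4$. Then $|V'_{\ge5}|=|V_{\ge5}|-1$, so even losing two contractible edges is affordable.
\end{itemize}
In the case $G'\in\mathcal{G}$, I would try to show that $G$ arises from $G'$ by an extension~1 or~2 (Figure~\ref{fig_3}), contradicting $G\notin\mathcal{G}$. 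This may instead require that $G$ be handled by a direct count.

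I expect controlling $E'_4$ to be the main obstacle. A separating $4$-cycle of $G'$ through $c_1$ corresponds, as in Lemma~\ref{lem:2.1}, to a separating $4$- or $5$-cycle of $G$ through $p$ or $c_1$. I must show that at most one edge, or two edges in the degree-$5$ case, loses contractibility.

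If this bookkeeping becomes too messy, my fallback is the weight approach. Using Claims~\ref{lem:4_6} and~\ref{cl2:3}, I would show that an isolated $P_1$ forces its neighbours $c_1,c_3$ to have $w\ge3$. Then the interior and exterior of the separating cycle $c_2c_3c_4c_1$, each of which is separated by it, contribute extra weight. Summing with Claim~\ref{lem:4_3} would give $\sum_{v}w(v)\ge 2|V_{\ge5}|+6$, contradicting Claim~\ref{lem:4_1}.
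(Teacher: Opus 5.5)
There is a genuine gap: neither of your two routes reaches a contradiction as written.

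\textbf{Contraction route.} The inequality you aim for points the wrong way. Minimality gives $|E'_4|\ge|V'_{\ge5}|+2$ whether or not $G'\in\mathcal{G}$, so that worry is moot. From it you can conclude $|E_4|\ge|V_{\ge5}|+3$ only if $|E_4|\ge|E'_4|+1$. When $d_G(c_2)=5$ you even need $|E_4|\ge|E'_4|+2$. So edges that stop being contractible in $G'$ help you, and losing two contractible edges is what you must prove, not something you can afford.

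The real danger is the opposite one: edges of $G$ outside $E_4$ that become contractible in $G'$. Since $pc_1\in E_4$, the separating $4$-cycle $C=c_1c_2c_3c_4c_1$ disappears once $p$ is absorbed. Any edge of $C$ (e.g.\ $c_2c_3$, $c_3c_4$, or the merged edges $c_1c_2$, $c_1c_4$) whose only separating $4$-cycle was $C$ can then become contractible.

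Your ``without loss of generality $pc_2,pc_4$ lie in separating $4$-cycles'' also silently discards the case where all four $pc_i$ are in $E_4$. In that case the four contractible edges at $p$ are replaced by $c_1c_2$, $c_1c_3$, $c_1c_4$, and $c_2c_3$, $c_3c_4$ may become contractible too. So a priori the net loss can be zero or negative. Nothing in your proposal controls these gains, so the key counting step is missing.

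\textbf{Fallback route.} This is essentially the paper's method, but your count falls short. The side of $C$ containing $p$ is just $\{p\}$, so Claim~\ref{cl2:3} applied there only reproduces $w(c_1),w(c_3)\ge 3$. Claim~\ref{lem:4_3} only gives the baseline $w\ge 2$. With the exterior of $C$ returning option (2) of Claim~\ref{cl2:3}, you get only $\sum_v w(v)\ge 2|V_{\ge5}|+4$. That is compatible with the choice of $G$.

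The missing idea is a case split on whether $p$ lies on a separating $4$-cycle.
\begin{enumerate}[(i)]
\item If it does not, all four $pc_i$ are in $E_4$. Then all four $c_i$ have $w\ge3$, and the exterior of $C$ supplies the rest via Claim~\ref{cl2:3}.
\item If $pc_2yc_4p$ is separating, first show that $y$ may be taken in $V_{\ge5}$. If $y\in V_4$, its two other link neighbours are adjacent to both $c_2$ and $c_4$, and one of them lies in $V_{\ge5}$ by Claim~\ref{cl2:2}. Then $c_1c_2yc_4c_1$ and $c_3c_2yc_4c_3$ are separating $4$-cycles with all vertices in $V_{\ge5}$. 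Their regions $R_1$, $R_3$ not containing $p$ are disjoint, and each contributes extra weight by Claim~\ref{cl2:3}, on top of the $+2$ from $c_1$ and $c_3$.
\end{enumerate}
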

\proof
By way of contradiction, suppose that $G[V_4]$ contains a component of $P_1=p_{1}$.
Let $C=c_{1}c_{2}c_{3}c_{4}c_{1}$ be a separating $4$-cycle induced by $N_G(p_{1})$.
Suppose that $p_{1}$ is contained in no separating $4$-cycle.
Then $w(c_i) \ge 3$ for each $i \in \{ 1, 2, 3, 4\}$  by Claim~\ref{lem:4_6}.
By Claim~\ref{cl2:3}, 
$C$ and the exterior of $C$ contain vertices satisfying one of (1) or (2) 
in Claim~\ref{cl2:3}.
Hence by Claim~\ref{lem:4_2}, we have $\sum_{v \in V_{\ge 5}}w(v) \ge 2|V_{\ge 5}|+6$, 
contrary to the choice of $G$.

Therefore, $p_{1}$ is contained in a separating $4$-cycle.
By symmetry, we may assume that $p_{1}c_{2}yc_{4}p_{1}$ is such a separating $4$-cycle, 
where $y \in V(G)-(V(C) \cup \{ p_{1} \})$. 
Suppose that $y \in V_4$. 
By Claim~\ref{cl2:2}, there exists $y' \in (N_G(y)-\{ c_{2}, c_{4} \}) \cap V_{\ge 5}$.
If $y' \in \{ c_{1}, c_{3} \}$, then $p_{1}c_{2}yc_{4}p_{1}$ is not a separating $4$-cycle, a contradiction.
If $y'c_{i} \in E(G)$ for some $i \in \{ 1, 3 \}$, then $y'c_{i}c_{2}y'$ and $y'c_{i}c_{4}y'$ are triangular faces, 
and hence $y' \in V_4$, a contradiction.
Therefore, in both cases, $p_{1}c_{2}y'c_{4}p_{1}$ is a separating $4$-cycle such that all vertices are contained in $V_{\ge 5}$.
This fact implies that we can take a separating $4$-cycle $p_{1}c_{2}yc_{4}p_{1}$ so that
$y \in V_{\ge 5}$.
Then we can see that $w(c_i) \ge 3$ for each $i \in \{ 1, 3 \}$  by Claim~\ref{lem:4_6}.
For each $i \in \{ 1, 3 \}$, let $R_i$ is a region 
bounded by a separating $4$-cycle $c_{i}c_{2}yc_{4}c_{i}$ and not containing $p_{1}$.
By Claim~\ref{cl2:3}, the separating $4$-cycle and $R_i$ contains vertices satisfying one of (1) or (2) in Claim~\ref{cl2:3}.
Hence by Claim~\ref{lem:4_2}, we have have $\sum_{v \in V_{\ge 5}}w(v) \ge 2|V_{\ge 5}|+6$, 
contrary to the choice of $G$.
\qed

By Claims~\ref{cl2:1} and \ref{cl2:4}, $G$ contains $2k$ vertices of $V_4$ as an induced matching, where $k \ge 1$.
Suppose $p_{1}, p_{2} \in V_4$ with $p_{1}p_{2} \in E(G)$, $\{ x_{1}, x_{2} \} = N_G(p_{1}) \cap N_G(p_{2})$.
Let $p'_{i}$ be the vertex of $N_G(p_i) - \{ p_{3-i}, x_1, x_2 \}$ for each $i$.
Since $|G| \ge 8$, we have $p'_{1}p'_{2} \notin E(G)$ (otherwise $G$ is isomorphic to ${\rm DW}_4$).

\begin{claim}\label{cl2:5}
$d_G(x_{i})=5$ for some $i \in \{1, 2\}$.
\end{claim}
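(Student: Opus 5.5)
We argue by contradiction: suppose $d_G(x_1),d_G(x_2)\ge 6$, and mimic the reduction used at the end of Claim~\ref{cl2:2}. Contract an edge near the $P_2$-component $p_1p_2$ to obtain a smaller $4$-connected triangulation $G'$ with $|G'|\ge 7$. We then check that $|E'_4|\ge |E_4|-1$ and $|V'_{\ge 5}|=|V_{\ge 5}|$. The minimality of $G$ gives $|E'_4|\ge |V'_{\ge5}|+2$, so $|E_4|\ge |V_{\ge5}|+3$, contradicting (1). The case where $G'$ has equality, i.e.\ $G'\in\mathcal G$, must be handled separately.

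\textbf{Choice of edge.} The natural operation is contraction 1 read backwards. Since $G[N_G[\{p_1,p_2\}]]\cong{\rm DW}^-_4$ with rim $p'_1x_1p'_2x_2$, I would split $p_1$ or $p_2$ (or contract $p_1x_1$ together with relabelling) so that $G'$ contains a configuration from which $G$ arises by extension 1 or 2. More directly, I would contract $p_1p_2$. Then $x_1,x_2$ lose one degree each and stay in $V_{\ge5}$ because $d_G(x_i)\ge 6$. The vertices $p'_1,p'_2$ keep their degree, and the new vertex has degree $4$. So $|V'_{\ge5}|=|V_{\ge5}|$.

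\textbf{$4$-connectivity of $G'$.} By Lemma~\ref{lem:2.1}, $G'$ is $4$-connected provided $p_1p_2$ lies in no separating $4$-cycle. Any $4$-cycle through $p_1p_2$ uses two other neighbours, which must be $p'_1$ and $p'_2$ (the $x_i$ give triangles). So the cycle is $p_1p_2p'_2p'_1$, which needs $p'_1p'_2\in E(G)$; this is excluded since $|G|\ge8$. Hence $p_1p_2\in E_4$ and $G'$ is $4$-connected. Moreover $|G'|\ge 7$.

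\textbf{Counting $E'_4$ (main obstacle).} By Lemma~\ref{lem:2.1}, an edge $e\ne p_1p_2$ of $G$ is non-contractible iff it lies in a separating $4$-cycle. Such a cycle survives contraction unless it passes through both $p_1,p_2$, which we just ruled out. So no contractible edge of $G$ becomes non-contractible except possibly edges now lying in a new separating $4$-cycle through the merged vertex $p$. Such a cycle corresponds to a separating $5$-cycle of $G$ through $p_1p_2$, and the edges gained or lost must be controlled. I also have to account for edge identifications: $p_1x_i$ and $p_2x_i$ merge into one edge for $i=1,2$. Since $p_1x_i\in E_4$ iff $p_2x_i\in E_4$ by symmetry of the ${\rm DW}^-_4$, each such pair could cost one. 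I would bound the total loss by one through a case analysis on whether the $p_jx_i$ are contractible; this uses Claim~\ref{lem:4_6} at $x_i$, which has $w(x_i)\ge 2$.

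\textbf{Equality case.} If $G'\in\mathcal G$, I would show that $G$ is obtained from $G'$ by extension 1, using $d_G(x_i)\ge6$ and the fact that $p$ together with its partner lies in a $P_2$. This contradicts (2). This equality and loss accounting is the step I expect to be delicate.
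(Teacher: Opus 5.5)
\textbf{There is a genuine gap in your proposal.} The parts that match the paper are fine: contracting $p_1p_2$, checking $p_1p_2\in E_4$ via $p'_1p'_2\notin E(G)$, and $|V'_{\ge5}|=|V_{\ge5}|$.

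The problem starts with the direction of the key inequality. Minimality gives $|E'_4|\ge|V'_{\ge5}|+2$ in either alternative. If $G'\in\mathcal G$ it holds with equality, so no separate ``equality case'' is needed. To reach $|E_4|\ge|V_{\ge5}|+3$ you need $|E_4|\ge|E'_4|+1$, which is an \emph{upper} bound on $|E'_4|$. The inequality you state, $|E'_4|\ge|E_4|-1$, is the converse and gives nothing.

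Consistently with this, your accounting tracks only contractible edges of $G$ that die in $G'$, which is the harmless direction. It never tracks non-contractible edges of $G$ that become contractible in $G'$. Your claim that a separating $4$-cycle survives unless it contains both $p_1$ and $p_2$ is false. The cycle $p_1x_1p'_2x_2p_1$ is always separating in $G$, since it cuts off $p_2$. In $G'$ it becomes $px_1p'_2x_2p$, which has the chord $pp'_2$.

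In particular every $p_jx_i$ is non-contractible in $G$, so merging $p_1x_i$ with $p_2x_i$ costs nothing. Among the seven edges at $\{p_1,p_2\}$, exactly $p_1p_2$, $p_1p'_1$ and $p_2p'_2$ lie in $E_4$. In $G'$, by contrast, $px_1,px_2\in E'_4$ unless $x_1px_2$ lies in a separating $4$-cycle, and $pp'_1,pp'_2\in E'_4$ unless $p'_1pp'_2$ does.

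This is exactly where the paper's proof splits into two cases.

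\begin{itemize}
\item If one of $x_1px_2$, $p'_1pp'_2$ lies in a separating $4$-cycle of $G'$ (they cannot both), then $|E'_4|=|E_4|-1$ and your reduction goes through.
\item If neither does, i.e.\ $N_G(p'_1)\cap N_G(p'_2)=\{x_1,x_2\}$ and $N_G(x_1)\cap N_G(x_2)=\{p_1,p_2,p'_1,p'_2\}$, then all four edges at $p$ are $4$-contractible. Moreover $G'$ acquires no new separating $4$-cycle. Hence $|E'_4|\ge|E_4|+1$, and contracting $p_1p_2$ yields no contradiction. Nothing in your proposal covers this case, including the other local operations you mention.
\end{itemize}

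The paper handles the second case with a separate weight count.

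\begin{itemize}
\item First, $w(p'_i)\ge 3$ for $i=1,2$.
\item If all edges $x_il^i_j$ are in $E_4$, then $w(x_i)\ge 3$. Either $d_G(x_i)\ge 7$, or a degree-$4$ vertex on the link of $x_i$, or Claim~\ref{cl2:3} applied to $p'_1x_1p'_2x_2p'_1$ adds more weight.
\item Otherwise some $x_1l^1_j\notin E_4$ lies in a separating $4$-cycle, and Claim~\ref{cl2:3} applies to it.
\end{itemize}

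In every subcase $\sum_{v\in V_{\ge5}}w(v)>2|V_{\ge5}|+4$. This contradicts the bound obtained from Claim~\ref{lem:4_1} and the choice of $G$. You need this argument, or some substitute, for that case.
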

\proof
By way of contradiction, suppose that $d_G(x_{i}) \ge 6$ for each $i \in \{1, 2\}$.
Let $G'$ be a plane triangulation obtained from $G$ by contracting $p_{1}p_{2}$, 
and let $E'_4$ and $V'_{\ge 5}$ denote the set of $4$-contractible edges and the set of vertices of degree at least $5$ in $G'$, respectively.
We denote $p \in V(G')$ as a new vertex by the contraction of $p_{1}p_{2}$.
In $G'$, if $p'_{1}pp'_{2}$ (resp., $x_{1}px_{2}$) is contained in a separating $4$-cycle, 
then $x_{1}px_{2}$ (resp., $p'_{1}pp'_{2}$) is contained in no separating $4$-cycle.
Suppose that one of $p'_{1}pp'_{2}$ and $x_{1}px_{2}$ is contained in a separating $4$-cycle. 
Then we have $|E'_4|=|E_4|-1$.
On the other hand for each $i \in \{ 1, 2\}$, we have $d_{G'}(x_i) \ge 5$ since $d_G(x_i) \ge 6$, and hence
$|V_{\ge 5}|=|V'_{\ge 5}|$.
Furthermore, it follows from the choice $G$ that $|E'_4| \ge |V'_{\ge 5}|+2$. 
Consequently, $|E_4| =|E'_4|+1 \ge  |V'_{\ge 5}|+3 >  |V_{\ge 5}|+2$, which contradicts the choice of $G$.

Thus neither $p'_{1}pp'_{2}$ nor $x_{1}px_{2}$ is contained in a separating $4$-cycle.
This implies that $N_G(p'_{1}) \cap N_G(p'_{2})= \{ x_{1}, x_{2} \}$ 
and $N_G(x_{1}) \cap N_G(x_{2})=\{ p_{1}, p_{2}, p'_{1}, p'_{2} \}$.

For each $i \in \{ 1, 2 \}$, let $l^{i}_{1} \ldots l^{i}_{k_i}$ be the link of $x_i$ other than $p_{1}, p_{2}, p'_{1}, p'_{2}$, where $l^{i}_1$ is adjacent to $p'_{1}$ on the link. 

First we suppose that for each $i \in \{ 1, 2 \}$, $x_{i}l^{i}_{1},  \ldots, x_{i}l^{i}_{k_i} \in E_4$.
Then we have $\sum_{v \in V_{\ge 5}}w(v) \ge 2|V_{\ge 5}|+4$ 
because $w(p'_{i}) \ge 3$ and $w(x_{i}) \ge 3$ for each $i$.
For some $i$, if either $d_G(x_{i}) \ge 7$ or 
$\{ l^{i}_{1}, \ldots , l^{i}_{k_i} \} \cap V_4 \neq \emptyset$, 
then we have $w(x_{i}) \ge 4$.
This implies that $\sum_{v \in V_{\ge 5}}w(v) \ge 2|V_{\ge 5}|+5$, 
contrary to the fact $|E_4|=V_{\ge 5}+2$.
Thus we have $d_G(x_{i}) = 6$ and $\{ l^{i}_{1}, \ldots , l^{i}_{k_i} \} \cap V_4 = \emptyset$. 
By Claim~\ref{cl2:3} for $p'_{1}x_{1}p'_{2}x_{2}p'_{1}$, 
the exterior of $p'_{1}x_{1}p'_{2}x_{2}p'_{1}$ contains (1) or (2).
If it contains (2), then $\{ v, u \} \cup \{ x_1, x_2 \} = \emptyset$ because all edges of $E_4$ adjacent $x_i$ is counted by weight $1$.
Thus we have $\sum_{v \in V_{\ge 5}}w(v) \ge 2|V_{\ge 5}|+6$, contrary to the choice of $G$.

Next we suppose that for some $i \in \{ 1, 2 \}$, 
$\{ x_{i}l^{i}_{1},  \ldots, x_{i}l^{i}_{k_i} \} \not \subset E_4$.
By symmetry, we suppose that $x^{1}l^{1}_j \notin E_4$ for some $j \in \{ 1, \ldots, k_1 \}$.
Since $N_G(x_{1}) \cap N_G(x_{2})=\{ p_{1}, p_{2}, p'_{1}, p'_{2} \}$, 
$x^{1}l^{1}_j$ is contained in a separating $4$-cycle $x_{1}l^{1}_j y l' x_{1}$, 
where $y \in V(G)-N_G(x_{1})$ and $l' \in N_G(x_i) -\{ l^{1}_j, l^{1}_{j-1}, l^{1}_{j+1}, p_{1}, p_{2} \}$.
By Claim~\ref{cl2:3} for $x_{1}l^{1}_j y l' x_{1}$, 
the interior of $x_{1}l^{1}_j y l' x_{1}$ (i.e., region not containing $p_{1}$) contains (1) or (2).
This together with the fact $w(p'_{i}) \ge 3$ for each $i \in \{ 1, 2 \}$,
we have $\sum_{v \in V_{\ge 5}}w(v) \ge 2|V_{\ge 5}|+4$.
If $\{ x_{2}l^{2}_{1},  \ldots, x_{2}l^{2}_{k_2} \} \not \subset E_4$, then we repeat the argument.
If $\{ x_{2}l^{2}_{1},  \ldots, x_{2}l^{2}_{k_2} \} \subset E_4$, then we apply the above argument.
In both case, we have $\sum_{v \in V_{\ge 5}}w(v) \ge 2|V_{\ge 5}|+6$, 
contrary to the choice of $G$.
\qed

By Claim~\ref{cl2:5} and symmetry, we may assume that $d_G(x_{1})=5$.
If $d_G(x_{2}) \ge 6$, then we can apply the contraction $1$ to $p_{1}p_{2}$, 
contrary to the choice of $G$.
So, we consider the case when $d_G(x_{2})=5$.
For each $i \in \{ 1, 2 \}$, let $y_i$ be the vertex of $N_G(x_i) - \{ p_{1}, p_{2}, p'_{1}, p'_{2} \}$.
If $y_{1}y_{2} \in E(G)$, then $G$ is isomorphic to $G_0$.
Thus $y_{1}y_{2} \notin E(G)$.
Then we have $d_G(p'_{i}) \ge 6$ for each $i \in \{ 1, 2 \}$, 
and hence we can apply the contraction $2$ to $x_{1}y_{1}$, contrary to the choice of $G$.
\qed


\section*{Acknowledgment}

The authors would like to thank Professor Kenta Ozeki for the help he gave to us during the preparation of this paper.
This work was supported by JSPS KAKENHI Grant number JP23K03204 (to M.F), JP21K03345 (to R.M), 25K07107 (to S.T), 
 and research grant of Senshu University (to S.T).

\end{document}